\def\lto{{\longrightarrow}}
\def\into{{\hookrightarrow}}
\def\xto{\xrightarrow}
\newcommand{\call}{{\mathcal L}}
\newcommand{\calo}{{\mathcal O}}
\newcommand{\PP}{{\mathbb P}}
\newcommand{\ZZ}{{\mathbb Z}}
\newcommand{\bfa}{{\mathbf a}}
\newcommand{\bfb}{{\mathbf b}}
\newcommand{\bfc}{{\mathbf c}}
\newcommand{\bfd}{{\mathbf d}}
\newcommand{\bfx}{{\mathbf x}}
\newcommand{\bfy}{{\mathbf y}}
\newcommand{\sfG}{\ensuremath{\mathsf G}}
\DeclareMathOperator{\adj}{adj}
\DeclareMathOperator{\car}{char}
\DeclareMathOperator{\coker}{coker}
\DeclareMathOperator{\cok}{coker}
\DeclareMathOperator{\Coker}{Coker}
\DeclareMathOperator{\Ext}{Ext}
\DeclareMathOperator{\Extgr}{Extgr}
\DeclareMathOperator{\GL}{GL}
\DeclareMathOperator{\id}{id}
\DeclareMathOperator{\Jac}{Jac}
\DeclareMathOperator{\Mat}{\mathsf{Mat}}
\DeclareMathOperator{\rank}{rank}
\DeclareMathOperator{\res}{res}
\DeclareMathOperator{\SL}{SL}
\DeclareMathOperator{\tr}{tr}
\newcommand{{\sbullet}}{{\scriptstyle\bullet}}
\newcommand{\diag}{\mathsf {diag}}
\theoremstyle{definition}
\newtheorem{defn}{Definition}[section]
\newtheorem{remark}[defn]{Remark}
\newtheorem{remarks}[defn]{Remarks}
\newtheorem{sit}[defn]{}
\newtheorem{example}[defn]{Example}
\theoremstyle{plain}
\newtheorem{proposition}[defn]{Proposition}
\newtheorem{theorem}[defn]{Theorem}
\newtheorem{theorema}[defn]{Theorem A}
\newtheorem{theoremb}[defn]{Theorem B}
\newtheorem{lemma}[defn]{Lemma}
\newtheorem{cor}[defn]{Corollary}
\let\oldmarginpar\marginpar
\def\marginpar#1{\oldmarginpar{\tiny #1}}
\begin{document}
\title[Ulrich Bundles]
{Moore Matrices and Ulrich Bundles on an Elliptic Curve}

\author{Ragnar-Olaf Buchweitz}
\address{Dept.\ of Computer and Math\-ematical Sciences,
University  of Tor\-onto at Scarborough, 
1265 Military Trail, 
Toronto, ON M1C 1A4,
Canada}
\email{ragnar@utsc.utoronto.ca}

\author{Alexander Pavlov}
\address{
Mathematical Sciences Research Institute (MSRI),
17 Gauss Way, Berkeley, CA 94720, USA}
\email{apavlov@msri.org}

\dedicatory{To Frank-Olaf Schreyer on the occasion of his sixtieth birthday}
\thanks{
This material is based upon work supported by the National Science Foundation under grant No.0932078 000 while the second author was in residence at the Mathematical Sciences Research Institute in Berkeley, California, during the 2015/2016 year. The first author was partly supported by NSERC Discovery Grant 3-642-114-80.}
\date{\today}

 \subjclass[2010]{Primary: 
14H52, 
14H60,  
14K25 , 
Secondary:
13C14, 
14H50, 
14H42.  
}

\keywords{Elliptic curves, Hesse cubics, Ulrich modules, Moore matrices, linear determinantal representations.}

\begin{abstract} 
We give normal forms of determinantal representations of a smooth projective plane cubic in terms of
{\em Moore matrices}. Building on this, we exhibit matrix factorizations for all indecomposable vector bundles
of rank $2$ and degree $0$ without nonzero sections, also called {\em Ulrich bundles}, on such curves. 
\end{abstract}
\maketitle

{\footnotesize\tableofcontents}
%
\section{Linear Determinantal Representations of Smooth Hesse Cubics and Statement of the Results}
\begin{sit}
Let $K$ be a field and $f\in S=K[x_{0},...,x_{n}]$ a nonzero homogeneous polynomial of degree $d\geqslant 1$.
A matrix $M=(m_{ij})_{i,j=0,...,d-1}\in\Mat_{d\times d}(S)$ is {\em linear\/} if its entries
are linear homogeneous polynomials and it provides a {\em linear determinantal representation\/} of 
$f$ if $\det M \doteq f$, where $\doteq$ means that the two quantities are equal up to a nonzero scalar from $K$. 
More generally, if $M$ is a linear matrix for which there exists a matrix $M'$ with 
$M\cdot M' = f\id_{d}= M'\cdot M$, then $M$ is an {\em Ulrich matrix\/} and $\mathbf F=\Coker M$, necessarily 
annihilated by $f$, is an {\em Ulrich module\/} over $R=S/(f)$. 

If $V(f)=\{f=0\}\subset \PP^{n}$, the projective hypersurface defined by $f$, is smooth, then the sheafification
of $\mathbf F$ is a vector bundle, called an {\em Ulrich bundle}. 

Our aim here is to give normal forms for all Ulrich bundles of rank $1$ or $2$ over a plane elliptic curve in 
Hesse form.
\end{sit}

Linear determinantal representations of hypersurfaces have been studied since, at least, the middle
of the $19^{th}$ century and for a very recent comprehensive treatment for curves and surfaces see
Dolgachev's monograph \cite{DolgBook}. This reference contains as well a detailed study of the geometry of
smooth cubic curves, especially of those in Hesse form and we refer to it for background material.

For smooth plane projective curves, the state-of-the-art result is due to Beauville.

\begin{theorem}[{\cite[Prop.3.1]{Beau00}}]
\label{beau}
Let $C=V(f)$ be a smooth plane projective curve of degree $d$ defined by an equation $f=0$ in $\PP^{2}$
over $K$. With $g=\tfrac{1}{2}(d-1)(d-2)$ the genus of $C$, one has the following.
\begin{enumerate}[\rm (a)]
\item
Let $L$ be a line bundle of degree $g-1$ on $C$ with $H^{0}(X, L)=0$. 
Then there exists a $d \times d$ linear matrix $M$ such that $f=\det M$ and an exact
sequence of $\calo_{\PP^{2}}$--modules
\begin{align*}
\xymatrix{
0\ar[r]&\calo_{\PP^{2}}(-2)^{d}\ar[r]^-{M}&\calo_{\PP^{2}}(-1)^{d}\ar[r]&L\ar[r]&0\,.
}
\end{align*}
\item
Conversely, let $M$ be a $d\times d$ linear matrix such that  $f\doteq \det M$. Then the
cokernel of $M: \calo_{\PP^{2}}(-2)^{d}\to \calo_{\PP^{2}}(-1)^{d}$ is a line bundle $L$ on $C$ of degree 
$g-1$ with $H^{0}(X, L)=0$.\qed
\end{enumerate}
\end{theorem}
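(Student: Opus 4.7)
The plan is to push $L$ forward to $\PP^{2}$, use Auslander--Buchsbaum and Castelnuovo--Mumford regularity to pin down the shape of its minimal graded free resolution, and conversely to read the invariants of $\Coker M$ off such a resolution. The cohomological input common to both directions is Riemann--Roch: since $\deg L = g-1$, one has $\chi(L)=0$, so $H^{0}(C,L)=0$ forces $H^{1}(C,L)=0$. Twisting and using Serre duality, one then checks $H^{0}(C,L(n))=0$ for $n\le 0$ and $H^{1}(C,L(n))=0$ for $n\ge 0$ (using $\deg L = g-1$ and $H^{0}(C,L)=0$ again to rule out the edge case where $K_{C}\otimes L^{-1}(-n)$ could be trivial), giving Hilbert polynomial $\chi(L(n))=dn$.

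For part (a), let $\tilde L$ denote the pushforward of $L$ to $\PP^{2}$. Since $\tilde L$ has depth $1$ at each of its points on the two-dimensional regular space $\PP^{2}$, Auslander--Buchsbaum gives $\pdim_{\calo_{\PP^{2}}}\tilde L=1$, so $\tilde L$ admits a minimal graded free resolution $0\to F_{1}\to F_{0}\to\tilde L\to 0$. The vanishings above show $\tilde L$ is $1$-regular in the sense of Castelnuovo--Mumford, so $\beta_{i,j}(\tilde L)=0$ for $j>i+1$. Combined with $H^{0}(\tilde L)=0$ (pushing generators into degree $\ge 1$) and minimality (pushing syzygies strictly above generators), this forces $F_{0}=\calo_{\PP^{2}}(-1)^{d}$ with $d=h^{0}(L(1))$ and $F_{1}=\calo_{\PP^{2}}(-2)^{e}$; matching Hilbert polynomials against $P_{\tilde L}(n)=dn$ pins down $e=d$. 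The resulting presentation matrix $M$ is $d\times d$ with linear entries, and since $\det M$ is a degree-$d$ form cutting out the support of $\tilde L$, namely $V(f)$, we obtain $\det M\doteq f$ by comparing degrees.

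For part (b), given a linear $d\times d$ matrix $M$ with $\det M\doteq f$, set $L=\Coker\bigl(M\colon\calo_{\PP^{2}}(-2)^{d}\to\calo_{\PP^{2}}(-1)^{d}\bigr)$. Cramer's identity $M\cdot\adj(M)=\det M\cdot\id$ shows $L$ is annihilated by $f$. The crucial step is that $M$ has rank exactly $d-1$ at every point of $C$: if the rank were $\le d-2$ at some $p\in C$, then column operations over $\calo_{\PP^{2},p}$ would place two columns of $M$ into the maximal ideal, forcing $\det M$ to vanish there to order $\ge 2$ and contradicting the smoothness of $C$ at $p$. Given this, Smith normal form over the DVR $\calo_{C,p}$ shows $L_{p}$ is free of rank $1$, so $L$ is a line bundle on $C$. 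The remaining invariants are read off the resolution: matching the Hilbert polynomial $d\binom{n+1}{2}-d\binom{n}{2}=dn$ with $\chi(L(n))=\deg L+dn+1-g$ gives $\deg L=g-1$, and the long exact cohomology sequence, together with $H^{0}(\calo_{\PP^{2}}(-1))=H^{1}(\calo_{\PP^{2}}(-2))=0$, gives $H^{0}(C,L)=0$.

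The main obstacle I anticipate is the regularity step in part (a), where ruling out unwanted Betti numbers requires combining $1$-regularity with minimality and the degree-$1$ concentration of generators, and dually the local-linear-algebra step in part (b), where the smoothness of $C$ is precisely what forbids higher rank-drops of $M$. Everything else reduces to Hilbert-polynomial arithmetic and the long exact sequence in cohomology.
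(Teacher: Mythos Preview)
The paper does not prove this statement: it is quoted from Beauville \cite{Beau00} and closed with a \qed, so there is no in-paper argument to compare against. Your proposal is correct and is essentially Beauville's own argument: push $L$ forward, use $\chi(L)=0$ together with $H^{0}(L)=0$ to get $H^{1}(L)=0$, deduce $1$--regularity, and read off the linear resolution; conversely, use smoothness of $C$ to bound the rank drop of $M$ and compute the numerical invariants from the resolution.

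Two small remarks on phrasing. First, for $1$--regularity you only need $H^{1}(\tilde L)=0$ and $H^{2}(\tilde L(-1))=0$, the latter being automatic on a curve; the stronger vanishing $H^{0}(L(n))=0$ for all $n\le 0$ and $H^{1}(L(n))=0$ for all $n\ge 0$ is true but not required, so you can drop the parenthetical about edge cases. Second, in part~(b) the local reduction of $M$ to $\diag(1,\dots,1,g)$ takes place over the two--dimensional regular local ring $\calo_{\PP^{2},p}$, not over the DVR $\calo_{C,p}$: the point is that some $(d-1)\times(d-1)$ minor is a unit at $p$, which already allows the elementary row and column operations you need without invoking Smith normal form over a PID. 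Neither of these affects the validity of your argument.
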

In this result, the matrix $M$ clearly determines $L$ uniquely up to isomorphism, 
but $L$ determines $M$ only up to equivalence of matrices
in that the cokernel of every matrix $PMQ^{-1}$ for $P,Q\in \GL(d,K)$ yields a line 
bundle isomorphic to $L$.
The so obtained action on these matrices of the group 
$\sfG_{d}= \GL(d,K)\times \GL(d,K)/\mathbb G_{m}(K)$, with $\mathbb G_{m}(K)$ the diagonally embedded 
subgroup of nonzero multiples of the identity matrix, is free and proper; see {\cite[Prop.3.3]{Beau00}}. 
The geometric quotient by this group action identifies with the affine variety $\Jac^{g-1}(C)\setminus \Theta$, 
the Jacobian of $C$ of line bundles of degree $g-1$ minus the theta divisor $\Theta$ of those line 
bundles of that degree that have a nonzero section.

There is therefore the issue of finding useful representatives, or normal forms, for such linear 
representations in a given orbit. 
Realizing hyperelliptic curves as double covers of $\PP^{1}$, Mumford \cite[IIIa,\S 2]{Mum84} exhibited canonical 
presentations for such line bundles, which motivated Beauville's work  \cite{Beau90}, and Laza--Pfister--Popescu in 
\cite{LPP} found such representative matrices for the Fermat cubic, while for 
general elliptic curves in Weierstra\ss\ form Galinat \cite{Galinat14}  determines normal forms of those 
linear representations.

Our first result yields the following normal forms for plane elliptic curves in Hesse form. Here, and in the sequel,
we denote $[a_{0}{:}\cdots{:}a_{n}]\in \PP^{n}(K)$ the $K$--rational point with homogeneous coordinates $a_{i}\in K$, 
not all zero.

\begin{theorema}
Over an algebraically closed field%
\footnote{Likely, it suffices that $K$ contains six distinct sixth roots of unity, which forces  $\car K\neq 2,3$.
However, one key ingredient in the proof, see (\ref{action}) below, is stated in the literature only over algebraically closed fields,
thus, we are compelled to make that assumption too.}
 $K$ of characteristic $\car K\neq 2,3$, each linear determinantal 
representation of the smooth plane projective curve $E$ with equation
\[
x_{0}^{3}+x_{1}^{3}+x_{2}^{3}+\lambda x_{0}x_{1}x_{2}=0\,,\quad  \lambda\in K\,,\lambda^{3}+27\neq 0\,,
\]
is equivalent to a {\em Moore matrix\/} 
\[
M_{(a_{0},a_{1},a_{2}),\bfx} = (a_{i+j}x_{i-j})_{i,j\in\ZZ/3\ZZ} =
\left(
\begin{array}{ccc}
a_{0}x_{0}&a_{1}x_{2}&a_{2}x_{1}\\
a_{1}x_{1}&a_{2}x_{0}&a_{0}x_{2}\\
a_{2}x_{2}&a_{0}x_{1}&a_{1}x_{0}
\end{array}
\right)
\]
with $\bfa=[a_{0}{:}a_{1}{:}a_{2}]\in E$ and $a_{0}a_{1}a_{2}\neq 0$.

Two such Moore matrices $M_{(a_{0},a_{1},a_{2}),\bfx}$ and $M_{(a_{0}',a_{1}',a_{2}'),\bfx}$ 
yield equivalent linear determinantal representations of $E$ if, and only if, 
$3\cdot_{E}\bfa =3\cdot_{E}\bfa'$, where
$
3\cdot_{E}\bfa =\bfa +_{E}\bfa +_{E}\bfa
$
is calculated with respect to the group law $+_{E}$ on $E$ whose
identity element is the inflection point $\mathfrak o =[0{:}-1{:}1]$.
\end{theorema}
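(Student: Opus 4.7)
The plan is to apply Beauville's Theorem~\ref{beau} and compare the $1$-dimensional family of Moore matrices $M_{\bfa,\bfx}$ with the $1$-dimensional moduli space $\Jac^{0}(E)\setminus\Theta \cong E\setminus\{\mathfrak o\}$ of equivalence classes of linear determinantal representations of $E$. The bridge between the two sides is the action of the Hessian Heisenberg group on the Hesse pencil.

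First, a direct cofactor expansion along the first row of $M_{\bfa,\bfx}$ yields
\[
\det M_{(a_0,a_1,a_2),\bfx}\;=\;a_0 a_1 a_2\,(x_0^3+x_1^3+x_2^3)\;-\;(a_0^3+a_1^3+a_2^3)\,x_0 x_1 x_2,
\]
a nonzero scalar multiple of $f$ precisely when $a_0 a_1 a_2\neq 0$ and $\bfa=[a_0{:}a_1{:}a_2]\in E$. So every such Moore matrix is a linear determinantal representation of $E$, with associated line bundle $L_\bfa := \Coker\bigl(M_{\bfa,\bfx}\colon \calo_{\PP^2}(-2)^3\to\calo_{\PP^2}(-1)^3\bigr)\in \Pic^0(E)\setminus\{\calo_E\}$ by Beauville.

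Next I would show that the assignment $\bfa\mapsto [L_\bfa]$ is invariant under translation by the $3$-torsion $E[3]\cong (\ZZ/3\ZZ)^2$. The Hessian Heisenberg group of order $27$ acts projectively on $\PP^2$, stabilises every member of the Hesse pencil, and, modulo its scalar centre, realises translation on $E$ by $E[3]$. Its two standard generators $\sigma\colon[x_0{:}x_1{:}x_2]\mapsto[x_0{:}\zeta x_1{:}\zeta^2 x_2]$ and $\tau\colon[x_0{:}x_1{:}x_2]\mapsto [x_1{:}x_2{:}x_0]$, with $\zeta\in K$ a primitive cube root of unity, act on the Moore parameter $\bfa$ by the same formulas, and an explicit row-and-column calculation shows that $M_{\sigma(\bfa),\bfx}$ and $M_{\tau(\bfa),\bfx}$ are related to $M_{\bfa,\bfx}$ by pre- and post-multiplication with concrete permutation and diagonal matrices in $\GL_3(K)$. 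Hence $L_\bfa$ depends on $\bfa$ only through its coset modulo $E[3]$, equivalently only through $3\cdot_{E}\bfa$.

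To conclude, $3\colon E\to E$ is a surjective isogeny with kernel $E[3]$ of order $9$ (as $\car K\neq 3$), so the map $\bfa\mapsto [L_\bfa]$ factors as $\bfa\mapsto 3\cdot_{E}\bfa$ followed by a morphism $\Phi\colon E\setminus\{\mathfrak o\}\to \Jac^{0}(E)\setminus\Theta$ between smooth irreducible affine curves of dimension one. Since the fibres of $\bfa\mapsto [L_\bfa]$ contain full $E[3]$-orbits by Step~2, while the fibres of $\bfa\mapsto 3\cdot_E\bfa$ are exactly $E[3]$-cosets, the induced $\Phi$ has trivial generic fibres and is therefore birational, hence an isomorphism by smoothness. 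Its surjectivity is the first assertion of Theorem~A, and its injectivity together with the Step~2 invariance yields the equivalence criterion $M_{\bfa,\bfx}\sim M_{\bfa',\bfx}\iff 3\cdot_{E}\bfa = 3\cdot_{E}\bfa'$. The main obstacle is the Step~2 computation: verifying that each generator of $E[3]$ acts on the set of Moore matrices by $\sfG_3$-equivalence, with the arising scalars carefully absorbed into $\mathbb G_m(K)\subset\sfG_3$. This Hessian calculation, alluded to in the footnote, is precisely what requires $K$ to contain a primitive cube root of unity and drives the entire proof.
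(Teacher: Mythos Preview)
Your overall architecture is sound and the determinant computation and the Heisenberg-equivalence step are fine, but the crucial inference ``the induced $\Phi$ has trivial generic fibres and is therefore birational'' is a non sequitur. From Step~2 you only know that each fibre of $\bfa\mapsto[L_{\bfa}]$ \emph{contains} an $E[3]$--orbit, i.e.\ has cardinality $\geqslant 9$; you have no upper bound. Since the fibres of $\bfa\mapsto 3\cdot_{E}\bfa$ have cardinality exactly $9$, the only conclusion available is that $\Phi$ is well defined, which you already knew. Nothing so far prevents $\Phi$ from being, say, multiplication by $2$ on $E$, in which case the fibres of $\bfa\mapsto[L_{\bfa}]$ would be cosets of $E[6]$ and both assertions of the theorem would fail. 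You need an independent argument forcing $\deg\Phi=1$. One route that almost works: extend $\Phi$ to $\tilde\Phi\colon E\to E$, observe that $\tilde\Phi^{-1}(\mathfrak o)=\{\mathfrak o\}$ set-theoretically because $\Phi$ sends $E\setminus\{\mathfrak o\}$ into itself, and then use that a separable isogeny has reduced fibres. But you would still need to verify nonconstancy of $\Phi$ and, in positive characteristic, its separability; neither is addressed.

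The paper avoids this difficulty entirely by proving the two halves separately with different tools. For the implication ``$M_{a,x}\sim M_{a',x}\Rightarrow 3\cdot_{E}\bfa=3\cdot_{E}\bfa'$'' it normalizes so that the equivalence becomes simultaneous conjugation of a pair of $3\times 3$ matrices, then computes three explicit trace invariants $\tr((N_{1}N_{2})^{2})$, $\tr(N_{1}^{2}N_{2}^{2})$, $\tr(N_{1}N_{2}N_{1}^{2}N_{2}^{2})$ and checks that these are precisely the homogeneous coordinates of $3\cdot_{E}\bfa$ computed earlier. For the surjectivity (every linear determinantal representation is equivalent to some Moore matrix) the paper does not count dimensions at all: it uses that $H^{0}(E,\calo_{E}(1))$ and $H^{0}(E,L(1))$ carry the Schr\"odinger representation $\rho_{1}$ of $\mathsf H_{3}$, decomposes the multiplication map $H^{0}(E,\calo_{E}(1))\otimes H^{0}(E,L(1))\to H^{0}(E,L(2))$ equivariantly, and reads off that in Schr\"odinger bases the presenting matrix of $L$ is literally a Moore matrix. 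Your moduli-counting strategy, if the gap were closed, would be more conceptual for surjectivity but would still need the trace computation (or an equivalent) for injectivity.
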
 

\begin{remarks}
\begin{enumerate}[\rm(a)]
\item 
Choosing an inflection point as origin for the group law,
the exceptional points $\bfa\in E$ with $a_{0}a_{1}a_{2}=0$ are precisely the $3$--torsion points, equivalently, 
the inflection points of $E$. They form the subgroup 
\[
E[3]=\{ [1{:}-\omega{:} 0], [0{:}1{:}-\omega], [-\omega{:} 0{:} 1]\mid \omega\in K, \omega^{3}=1\}\subseteq E\,,
\]
isomorphic to the elementary abelian $3$--group $\ZZ/3\ZZ\times\ZZ/3\ZZ$ of rank $2$.
\item
In geometric terms, the preceding result states that the map $\bfa \mapsto \cok M_{a,x}$ from 
$E\setminus E[3]$ to its punctured Jacobian  $\Jac^{0}(E)\setminus \{[\calo_{E}]\}\cong E\setminus E[1]$ 
of line bundles of degree $0$ without nonzero sections is well defined and is 
(isomorphic to the restriction of) the isogeny%
\footnote{We thank Steve Kudla for suggesting this interpretation.} 
of degree $9$ that is given by multiplication by $3$ on $E$.
\end{enumerate}
\end{remarks}

Building on the previous result, our second contribution is as follows.
\begin{theoremb}
\label{thmb}
Let $E$ be the smooth plane cubic curve from above.
\begin{enumerate}[\rm(a)]
\item 
\label{thmba}
Let $F$ be an indecomposable vectorbundle of rank $2$ and degree $0$ on $E$.
If $H^{0}(E,F)=0$, then there exists $a=(a_{0}, a_{1}, a_{2})\in K^{3}$ representing a point $\bfa\in E$ with 
$a_{0}a_{1}a_{2}\neq 0$ such that the sequence of $\calo_{\PP^{2}}$--modules
\begin{align*}
\xymatrix{
0\ar[r]&\calo_{\PP^{2}}(-2)^{6}\ar[rrr]^-{
{\left(
\begin{array}{cc}
{M_{a, x}} &M_{b, x}\\
0&M_{a, x}
\end{array}\right)}
}&&&\calo_{\PP^{2}}(-1)^{6}\ar[r]&F\ar[r]&0\,,
}
\end{align*}
with $b=(b_{0},b_{1},b_{2})\in K^{3}$ representing $2\cdot_{E}\bfa$, is exact. 
\item
\label{thmbb}
Conversely, if $a$ represents $\bfa\in E$ with $a_{0}a_{1}a_{2}\neq 0$ and $b$ represents 
$2\cdot_{E}\bfa$, then the cokernel of the block matrix
\[\left(
\begin{array}{cc}
{M_{a,x}} &M_{b,x}\\
0&M_{a,x}
\end{array}\right):\calo_{\PP^{2}}(-2)^{6}\lto \calo_{\PP^{2}}(-1)^{6}
\]
is an indecomposable vectorbundle $F$ of rank $2$ and degree $0$ on $E$ that has no nonzero 
sections, $H^{0}(E,F)=0$.
\item
\label{thmbc} 
Replacing $a$ by $a'$ results in a vector bundle isomorphic to $F$ if, and only if,
$3\cdot_{E}\bfa = 3\cdot_{E}\bfa'$ on $E$.
\end{enumerate}
\end{theoremb}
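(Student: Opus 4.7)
The plan is to exploit Atiyah's classification of indecomposable vector bundles on an elliptic curve in tandem with Theorem A, treating the constructive part (\ref{thmbb}) first. Given $a$ and $b$ as in the hypothesis, set $A=M_{a,x}$ and $B=M_{b,x}$. The block-upper-triangular form of $N$ embeds $N$ into a commutative diagram on $\PP^{2}$ whose rows are the split short exact sequences $0\to\calo_{\PP^{2}}(-k)^{3}\to\calo_{\PP^{2}}(-k)^{6}\to\calo_{\PP^{2}}(-k)^{3}\to 0$ for $k=1,2$, with the three vertical arrows being $A$, $N$, $A$. Applying the snake lemma and invoking Theorem A, which identifies $\cok A$ with a line bundle $L$ on $E$ of degree $0$ satisfying $H^{0}(E,L)=0$, I obtain a short exact sequence $0\to L\to F\to L\to 0$ of sheaves on $E$. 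This already shows that $F$ is a rank $2$ vector bundle of degree $0$ with $H^{0}(E,F)=0$.

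The main obstacle is to prove that this extension is non-split, which is equivalent to $F$ being indecomposable (so $F\cong F_{2}\otimes L$, where $F_{2}$ denotes Atiyah's non-split self-extension of $\calo_{E}$). Since $\Ext^{1}_{E}(L,L)\cong H^{1}(E,\calo_{E})\cong K$ is one-dimensional, only a single scalar invariant needs to be shown non-zero. I plan to compute the extension class via the connecting homomorphism $\Hom_{E}(L,L)\to\Ext^{1}_{E}(L,L)$ coming from the snake-lemma diagram, observing that the class vanishes if and only if $B$ can be written as $AX+YA$ with $X,Y\in\Mat_{3\times 3}(K)$. Translating $B=M_{b,x}$ into a \v{C}ech cocycle on a cover of $E$ on which $A$ admits local splittings, the condition that $b$ represents $2\cdot_{E}\bfa$, together with $a_{0}a_{1}a_{2}\neq 0$, should force that cocycle to be non-cohomologous to zero. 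This step is delicate and rests on the multiplicative behavior of Moore matrices under the group law $+_{E}$, and it is the heart of the proof.

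For part (\ref{thmba}), Atiyah's classification yields that any indecomposable rank $2$ degree $0$ bundle $F$ on $E$ with $H^{0}(E,F)=0$ is of the form $F_{2}\otimes L'$ for a unique $L'\in\Jac^{0}(E)\setminus\{[\calo_{E}]\}$, and so fits into a non-split self-extension $0\to L'\to F\to L'\to 0$. By Theorem A there exists $\bfa\in E\setminus E[3]$ with $L'\cong\cok M_{a,x}$; letting $b$ represent $2\cdot_{E}\bfa$, part (\ref{thmbb}) produces a non-split extension of $L'$ by $L'$, which is unique up to isomorphism on $E$, so the cokernel of the associated block matrix is isomorphic to $F$. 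Finally part (\ref{thmbc}) is a bookkeeping consequence: $F$ is determined up to isomorphism by $L'$ alone, and by Theorem A $L'$ depends on $\bfa$ only through $3\cdot_{E}\bfa$; hence $F\cong F'$ precisely when $3\cdot_{E}\bfa=3\cdot_{E}\bfa'$.
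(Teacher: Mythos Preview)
Your overall architecture is correct and matches the paper's: snake lemma for the extension $0\to L\to F\to L\to 0$, Atiyah's classification for part~(\ref{thmba}), and the uniqueness of $L$ plus Theorem~A for part~(\ref{thmbc}). The genuine gap is exactly where you flag it: the non-splitness of the extension in part~(\ref{thmbb}). You reduce it correctly to showing that $M_{b,x}$ is not of the form $UM_{a,x}-M_{a,x}V$ with $U,V\in\Mat_{3\times 3}(K)$, but the proposed \v{C}ech computation is left as a hope rather than an argument, and ``multiplicative behaviour of Moore matrices under $+_{E}$'' does not by itself yield the needed obstruction.

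The paper closes this gap by constructing an explicit linear functional that kills all coboundaries but not $M_{b,x}$. Concretely, it first checks (via the description $\Extgr^{1}_{R}(\mathbf L,\mathbf L)\cong\{C:\tr(M_{a,x}^{\adj}C)\equiv 0\bmod f\}/\{UM_{a,x}-M_{a,x}V\}$) that every $M_{b,y}$ with $y\in S_{1}^{3}$ lies in the numerator, the point being that the coefficient vectors appearing in $\tr(M_{a,x}^{\adj}M_{b,e_{i}})$ are rows of $M_{\iota(a),a}$, whose kernel is spanned by $b$. It then defines the \emph{divergence} $\mathsf{div}(M_{b,y})=\sum_{i}\partial y_{i}/\partial x_{i}$ and proves, by differentiating the matrix identity $M_{b,y}=UM_{a,x}-M_{a,x}V$ and comparing diagonal entries, that $\mathsf{div}$ vanishes on every coboundary. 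Since $\mathsf{div}(M_{b,x})=3\neq 0$ in $K$ (here $\car K\neq 3$ is used), the class of $M_{b,x}$ is nonzero in the one-dimensional space $\Extgr^{1}_{R}(\mathbf L,\mathbf L)\cong K$, and the extension is non-split. This replaces your \v{C}ech step with a short, purely algebraic computation; you should either carry out an equivalent explicit calculation or adopt this divergence invariant.
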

In the next section we will review some known facts about elliptic curves in Hesse form and will prove 
Theorem A. In section 3 we will establish Theorem B.

\section{Proof of Theorem A}
To lead up to the proof of Theorem A, we first review some ingredients. To begin with, we review how
Moore matrices encode conveniently the group law on a smooth Hesse cubic $E$.

\subsection*{Moore matrices and their rank}  
\begin{defn}
With $a=(a_{0}, a_{1}, a_{2})\in K^{3}$, and $x=(x_{0},x_{1},x_{2})\in S=K[x_{0},x_{1},x_{2}]$ the vector of coordinate linear forms, the {\em Moore matrix\/} defined by $a$ is
\begin{align*}
M_{a,x} &= (a_{i+j}x_{i-j})_{i,j\in\ZZ/3\ZZ} =
\left(
\begin{array}{ccc}
a_{0}x_{0}&a_{1}x_{2}&a_{2}x_{1}\\
a_{1}x_{1}&a_{2}x_{0}&a_{0}x_{2}\\
a_{2}x_{2}&a_{0}x_{1}&a_{1}x_{0}
\end{array}
\right)
\intertext{with {\em adjugate}, or {\em signed cofactor matrix}}
M_{a,x}^{\adj} &=(a_{i+j-1} a_{i+j+1}x_{j-i}^{2}-a_{i+j}^{2}x_{j-i-1}x_{j-i+1})_{ i,j\in\ZZ/3\ZZ}\\
&=\left(
\begin{array}{ccc}
a_{1}a_{2}x_{0}^{2}-a_{0}^{2}x_{1}x_{2}&
a_{0}a_{2}x_{1}^{2}-a_{1}^{2}x_{0}x_{2}&
a_{0}a_{1}x_{2}^{2}-a_{2}^{2}x_{0}x_{1}\\
a_{0}a_{2}x_{2}^{2}-a_{1}^{2}x_{0}x_{1}&
a_{0}a_{1}x_{0}^{2}-a_{2}^{2}x_{1}x_{2}&
a_{1}a_{2}x_{1}^{2}-a_{0}^{2}x_{0}x_{2}\\
a_{0}a_{1}x_{1}^{2}-a_{2}^{2}x_{0}x_{2}&
a_{1}a_{2}x_{2}^{2}-a_{0}^{2}x_{0}x_{1}&
a_{0}a_{2}x_{0}^{2}-a_{1}^{2}x_{1}x_{2}
\end{array}
\right)
\end{align*}
so that
\begin{align*}
\det M_{a,x} = M_{a,x}M_{a,x}^{\adj}= M_{a,x}^{\adj}M_{a,x}=
a_{0}a_{1}a_{2}(x_{0}^{3}+x_{1}^{3}+x_{2}^{3}) - 
(a_{0}^{3}+a_{1}^{3}+a_{2}^{3})x_{0}x_{1}x_{2}\,.
\end{align*}
\end{defn}

If now $a_{0}a_{1}a_{2}\neq 0$, then set $\lambda = (a_{0}^{3}+a_{1}^{3}+a_{2}^{3})/a_{0}a_{1}a_{2}\in K$
to obtain 
\[\det M_{a,x} \doteq  f := x_{0}^{3}+x_{1}^{3}+x_{2}^{3} -\lambda x_{0}x_{1}x_{2}\,,
\] 
whence $M_{a, x}$ indeed yields a determinantal presentation of the cubic curve $C=V(f)$ and the point 
$\bfa=[a_{0}{:}a_{1}{:}a_{2}]$ in $\PP^{2}(K)$ underlying $a$ lies on $C$.

Note that $C$ will be smooth if, and only if, $\lambda^{3}\neq 27$ in $K$. In the smooth case
we write $E=V(f)$ to remind the reader that this curve is {\em elliptic} over $K$ in that it is smooth of 
genus $1$ and contains at least one point, for example $[0{:}-1{:}1]$, defined over $K$.

\begin{remark}
As pointed out in \cite{Pa15}, Moore matrices have already appeared in the literature in a variety of contexts: to 
describe equations of projective embeddings of elliptic curves; see \cite{GP98}; to give an explicit formula for the 
group operation on a cubic in Hesse form; see \cite{Frium02} and \cite{Ra}; as differential in a projective 
resolution of the field over elliptic algebras; see \cite{ATvdB90}. 

Although the matrices above were known, and are easily established to form a matrix factorization of a 
Hesse cubic cf. \cite[example $3.6.5$]{EG10}, their relation to line bundles of degree $0$ via representations of 
the Heisenberg group, as we establish below, seems first to have been observed in  \cite{Pa15}.
\end{remark}

The next result is well-known and easily established through, say, explicit calculation
as in \cite[Lemma 3]{Frium02}.

\begin{lemma}
\label{rank2}
For $E=V(f)$ a smooth cubic in Hesse form and every pair $a, b$ with $\bfa,\bfb \in E$,
the (specialized) Moore matrix $M_{a,b}$ is of rank $2$. 
\qed 
\end{lemma}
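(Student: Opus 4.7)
The plan is to establish the two bounds $\rank M_{a,b}\le 2$ and $\rank M_{a,b}\ge 2$ separately. For the upper bound, I would substitute $x=b$ into the determinant identity
\[
\det M_{a,x} = a_0 a_1 a_2(x_0^3+x_1^3+x_2^3) - (a_0^3+a_1^3+a_2^3)\,x_0 x_1 x_2
\]
already computed just above. The defining equation $f(x)=x_0^3+x_1^3+x_2^3-\lambda x_0 x_1 x_2$ of $E$ yields $a_0^3+a_1^3+a_2^3 = \lambda\, a_0 a_1 a_2$ and $b_0^3+b_1^3+b_2^3 = \lambda\, b_0 b_1 b_2$ for every $\bfa,\bfb\in E$, so the two summands on the right cancel and $\det M_{a,b}=0$.

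For the lower bound, I would exhibit at least one nonvanishing $2\times 2$ minor by case analysis. When $a_0 a_1 a_2\neq 0$, the same determinant identity reads $\det M_{a,x} = a_0 a_1 a_2\cdot f(x)$, so $M_{a,x}$ is a linear determinantal representation of the smooth cubic $E$. Beauville's Theorem 1.2 then identifies the cokernel of $M_{a,x}\colon\calo_{\PP^2}(-2)^3\to\calo_{\PP^2}(-1)^3$ with a line bundle $L$ of degree $0$ on $E$. Taking fibres of $L$ at any $\bfb\in E$ yields a $1$-dimensional cokernel of $M_{a,b}\colon K^3\to K^3$, forcing $\rank M_{a,b}=2$. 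A direct comparison of $(i,j)$-entries shows that the column permutation $j\mapsto -j$ in $\ZZ/3\ZZ$ carries $M_{b,a}$ to $M_{a,b}$, so $\rank M_{a,b}=\rank M_{b,a}$, and the case $b_0 b_1 b_2\neq 0$ reduces to the previous one applied to $M_{b,x}$ at $x=a$.

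This leaves only the residual case where both $\bfa$ and $\bfb$ lie in the $3$-torsion subgroup $E[3]$, a group of order $9$ described explicitly in the remarks following Theorem A. Here I would verify the rank bound by inspecting the finitely many pairings, exploiting translations inside $E[3]$ together with the cyclic index-shift symmetries of the Moore construction (which preserve rank) to reduce to a small handful of representative matrices; in each representative, a $2\times 2$ minor involving a single nonzero product of one $a_i$ and one $b_j$ remains nonzero. I expect the main obstacle to be the careful combinatorial bookkeeping required in this finite 3-torsion verification and in confirming the identity $\rank M_{a,b}=\rank M_{b,a}$, both of which are routine but demand attention to the cyclic indexing.
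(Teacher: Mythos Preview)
Your argument is correct. The upper bound via the determinant identity is exactly right, and the Beauville argument for the case $a_0a_1a_2\neq 0$ is legitimate since Theorem~\ref{beau} is stated as independent background before this lemma. Your symmetry reduction $\rank M_{a,b}=\rank M_{b,a}$ via the column permutation $j\mapsto -j$ is also correct, and the residual $E[3]\times E[3]$ case is indeed a finite check; for that step no circularity arises, since identifying the points with $a_0a_1a_2=0$ as a set of nine explicit points uses only the Hesse equation, not the group law.

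That said, your route differs from the paper's. The paper does not give a proof at all: it simply declares the result well known and points to an explicit calculation in Frium's paper. A direct, uniform argument in that spirit would show that the adjugate $M_{a,b}^{\adj}$ (whose entries are written out just before the lemma) cannot vanish identically when $\bfa,\bfb\in E$ and $E$ is smooth; this handles all cases at once without Beauville and without a separate $3$--torsion check. Your approach trades that single elementary computation for a conceptually cleaner but much heavier input (Beauville's theorem) in the generic case, at the cost of still needing a finite verification in the boundary case. Either works; the paper's citation simply reflects that this is meant to be a one-line linear-algebra fact rather than something requiring the structure theory of line bundles.
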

In the situation of the preceding Lemma, basic Linear Algebra tells us that the one-dimensional null space
$M_{a,b}^{\perp}=\{(c_{0},c_{1},c_{2})\in K^{3}\mid M_{a,b}\cdot (c_{0},c_{1},c_{2})^{T}=0\}$ is
spanned by the column vectors of $M_{a,b}^{\adj}$. 
Following Dolgachev, we denote 
\[
\mathfrak l(M_{a,b})=\bfc =[c_{0}{:} c_{1}{:} c_{2}]\in\PP^{2}(K)
\]
the point underlying the one-dimensional space $M_{a,b}^{\perp}$ in $\PP^{2}$. 

 A less obvious result, part of \cite[Thm.4]{Frium02}, is that $\bfc$ will again be a point of $E$ 
 along with $\bfa,\bfb$.

\begin{sit}
Before we turn to the group structure, let us note that the transpose of a Moore matrix is again a Moore matrix,
\begin{align*}
M_{a,b}^{T} =(a_{j+i}b_{j-i})_{i,j\in\ZZ/3\ZZ} = (a_{i+j}b_{-(i-j)})_{i,j\in\ZZ/3\ZZ} = M_{a,\iota(b)}\,,
\end{align*}
where $\iota$ is the involution $\iota(x_{0},x_{1},x_{2})= (x_{0},x_{2},x_{1})$, or, 
counting indices modulo $3$, $\iota(x_{j})=x_{-j}$. For use below, we follow again Dolgachev and set
\begin{align*}
\mathfrak r(M_{a,b}) &=\mathfrak l(M_{a,b}^{T}) = \mathfrak l(M_{a,\iota(b)})\in\PP^{2}(K)\,.
\end{align*}
The point $\mathfrak r(M_{a,b}) =\bfd = [d_{0}{:} d_{1}{:} d_{2}]$ thus represents the
space ${}^{\perp}M_{a,x}$ of solutions to the linear system of equations $d\cdot M_{a,b}=0$, 
spanned by the row vectors of $M_{a,b}^{\adj}$. 
\end{sit}

\begin{theorem}[see {\cite[Thm.4]{Frium02}}]
\label{addE}
The assignment $(\bfa,\bfb)\mapsto \bfc = \mathfrak l(M_{a,b})$ for $\bfa,\bfb\in E$ defines
the group law on $E$ by setting $\bfb-_{E}\bfa =\bfc$. The identity element is given by $\mathfrak o =[0{:}-1{:}1]$
and the inverse of $\bfa$ is $-_{E}\bfa = \iota(\bfa) = [a_{0}{:}a_{2}{:}a_{1}]$.\qed
\end{theorem}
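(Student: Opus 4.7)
Set $\phi\colon E\times E\to E$, $(\bfa,\bfb)\mapsto\mathfrak l(M_{a,b})$. The strategy is to upgrade $\phi$ from a pointwise recipe to a morphism of abelian varieties and then invoke rigidity; three easy boundary computations then pin down the formula.

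\emph{Step 1: $\phi$ is a morphism $E\times E\to E$.} By Lemma~\ref{rank2}, $M_{a,b}$ has rank exactly $2$ at every point of $E\times E$, so its right null space is a line in $K^{3}$ and $\phi$ is well defined set-theoretically. The three columns of the adjugate $M_{a,b}^{\adj}$ furnish three polynomial charts for $\phi$ with complementary base loci: since $M_{a,b}^{\adj}$ has rank $1$, at least one column is nonzero at every point of $E\times E$, and glueing the charts gives a morphism to $\PP^{2}$. A direct polynomial manipulation using both Hesse relations $\sum a_{i}^{3}=\lambda a_{0}a_{1}a_{2}$ and $\sum b_{i}^{3}=\lambda b_{0}b_{1}b_{2}$ then shows $\phi(\bfa,\bfb)$ satisfies the cubic, so $\phi$ lands in $E$.

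\emph{Step 2: Three boundary values.} By solving the three small explicit homogeneous linear systems obtained by specializing one entry of $\phi$ to $\mathfrak o=(0,-1,1)$, one checks
\[
\phi(\mathfrak o,\mathfrak o)=\mathfrak o,\qquad \phi(\mathfrak o,\bfb)=\bfb,\qquad \phi(\bfa,\mathfrak o)=\iota(\bfa)=[a_{0}{:}a_{2}{:}a_{1}].
\]

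\emph{Step 3: Rigidity.} Since $\phi(\mathfrak o,\mathfrak o)=\mathfrak o$, the rigidity theorem for morphisms between abelian varieties forces $\phi$ to be a group homomorphism, and hence of the form $\phi(\bfa,\bfb)=\alpha(\bfa)+_{E}\beta(\bfb)$ with $\alpha=\phi(-,\mathfrak o)$ and $\beta=\phi(\mathfrak o,-)$ in $\End(E)$. Step~2 identifies $\beta=\id_{E}$ and $\alpha=\iota|_{E}$. Now $\iota|_{E}$ is a nontrivial involution of $E$ fixing $\mathfrak o$, and $\Aut(E,\mathfrak o)$ is $\ZZ/2$, $\ZZ/4$, or $\ZZ/6$ depending on $j(E)$, each of which has a unique element of order~$2$, namely the group inversion $-_{E}$. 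Hence $\iota|_{E}=-_{E}$, so $\phi(\bfa,\bfb)=-_{E}\bfa+_{E}\bfb=\bfb-_{E}\bfa$. This simultaneously establishes the group-law recipe, the identity element $\mathfrak o$, and the formula $-_{E}\bfa=\iota(\bfa)$.

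\emph{Main obstacle.} The principal calculational obstacle is the polynomial identity verifying $\mathfrak l(M_{a,b})\in E$, which must make symmetric use of the two cubic constraints on $\bfa$ and $\bfb$ even though the adjugate formulas are not visibly symmetric. Once that identity is in hand, the rigidity theorem for abelian varieties does the heavy conceptual lifting, upgrading three elementary null-space computations to the global group law.
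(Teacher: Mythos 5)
The paper offers no proof of this statement at all: the \verb|\qed| is attached to the statement itself and the result is simply imported from Frium \cite{Frium02}, whose Theorem 4 proceeds by explicitly matching the kernel of $M_{a,b}$ against the chord--tangent construction. Your route is therefore genuinely different and, as a plan, sound: replacing the bulk of Frium's computation by the rigidity theorem is an attractive trade, since once $\phi$ is known to be a morphism $E\times E\to E$ with $\phi(\mathfrak o,\mathfrak o)=\mathfrak o$, rigidity does give $\phi(\bfa,\bfb)=\alpha(\bfa)+_{E}\beta(\bfb)$ with $\alpha,\beta\in\End(E,\mathfrak o)$, and your three specializations are correct --- I checked them: $M_{(0,-1,1),b}$ has kernel spanned by $(b_{0},b_{1},b_{2})$, and $M_{a,(0,-1,1)}$ is antisymmetric with kernel spanned by $(a_{0},a_{2},a_{1})$, so indeed $\beta=\id$ and $\alpha=\iota|_{E}$; the identification $\iota|_{E}=[-1]$ via the uniqueness of the order-two element of the cyclic group $\Aut(E,\mathfrak o)$ in characteristic $\neq 2,3$ is also fine (note $\iota$ fixes $\mathfrak o=[0{:}-1{:}1]$ and is nontrivial on $E$). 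The one load-bearing step you assert but do not carry out is precisely the containment $\mathfrak l(M_{a,b})\in E$, without which the target of $\phi$ is only $\PP^{2}$ and rigidity cannot be invoked; this is exactly the part the paper itself singles out as ``a less obvious result, part of \cite[Thm.4]{Frium02}.'' You should either perform that bihomogeneous polynomial verification (using both Hesse relations, as you indicate) or cite Frium for it explicitly; note that softer arguments (irreducibility of the image, constancy of $\deg\phi_{\bfa}^{*}\calo(1)$ in the family) only show each $\phi(\bfa,-)(E)$ is a plane cubic, not that it is $E$, so the identity cannot be avoided by general nonsense. With that step supplied or referenced, your proof is complete and arguably more conceptual than the computational one on record.
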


Before continuing towards the proof of Theorem A, we take the opportunity to interpret Moore matrices 
geometrically in two ways, following Artin--Tate--van den Bergh \cite{ATvdB90} in the first and 
Dolgachev \cite{DolgBook} in the second. 

\subsection*{Geometric Interpretation  \`a la Artin--Tate--Van den Bergh}\quad\\
\begin{sit}
\label{ATV}
In the introduction to \cite{ATvdB90} the authors consider%
\footnote{Their indexing of the ingredients is different, but that is just due to the fact that the authors 
of  \cite{ATvdB90} chose the point at infinity $[1{:}-1{:}0]$ as the origin for the group law  on $E$.}
the trilinear forms
\begin{align}
f=\left(
\begin{array}{c}
f_{0}\\
f_{1}\\
f_{2}
\end{array}
\right) &= 
\left(
\begin{array}{c}
a_{0}x_{0}y_{0}+a_{1}x_{2}y_{1}+a_{2}x_{1}y_{2}\\
a_{1}x_{1}y_{0}+a_{2}x_{0}y_{1}+a_{0}x_{2}y_{2}\\
a_{2}x_{2}y_{0}+a_{0}x_{1}y_{1}+a_{1}x_{0}y_{2}
\end{array}
\right)
\end{align}
that can as well be interpreted as a system of three linear equations in, at least, two ways:
\begin{align}
\label{atv2}
\left(
\begin{array}{ccc}
a_{0}x_{0}&a_{1}x_{2}&a_{2}x_{1}\\
a_{1}x_{1}&a_{2}x_{0}&a_{0}x_{2}\\
a_{2}x_{2}&a_{0}x_{1}&a_{1}x_{0}
\end{array}
\right)
\left(
\begin{array}{c}
y_{0}\\
y_{1}\\
y_{2}
\end{array}
\right)&=
\left(
\begin{array}{c}
f_{0}\\
f_{1}\\
f_{2}
\end{array}
\right) 
=
\left[\left(
\begin{array}{ccc}
x_{0}&
x_{1}&
x_{2}
\end{array}
\right)
\left(
\begin{array}{ccc}
a_{0}y_{0}&a_{2}y_{1}&a_{1}y_{2}\\
a_{2}y_{2}&a_{1}y_{0}&a_{0}y_{1}\\
a_{1}y_{1}&a_{0}y_{2}&a_{2}y_{0}
\end{array}
\right)
\right]^{T}\,,
\intertext{or, shorter, in terms of Moore matrices,}
M_{a,x}\cdot y^{T} &= f = \big(x \cdot M_{\iota(a),\iota(\bfy)}\big)^{T} 
=M_{\iota(a),y}\cdot x^{T}\,.
\end{align}
\end{sit}

\begin{sit}
Viewing, for a fixed $\bfa\in E$, the $f_{i}$ as sections of $\calo(1,1)$ on 
$\PP^{2}_{x}\times \PP^{2}_{\bfy}$, these equations 
imply, by Lemma (\ref{rank2}), that the subscheme 
$X=V(f_{0}, f_{1}, f_{2}) \subseteq  \PP^{2}_{\bfx}\times \PP^{2}_{\bfy}$ is mapped
isomorphically by each of the projections $p_{\bfx},p_{\bfy}:\PP^{2}_{x}\times \PP^{2}_{\bfy}\lto \PP^{2}$
onto $E\subseteq \PP^{2}$, and, in light of the preceding Theorem, the subscheme $X$ constitutes 
the graph of the translation by $-\bfa$ on the elliptic curve in that
\begin{align*}
t_{-\bfa}&=p_{\bfy}(p_{\bfx}|_{X})^{-1} : 
E\xto{\ \cong\ } E\,,\quad t_{-\bfa}(\bfx) =\mathfrak l(M_{a,x}) = \bfx -_{E}\bfa =\bfy
\intertext{when going from $\PP^{2}_{x}$ to  $\PP^{2}_{\bfy}$, while it represents the graph of the 
translation by $\bfa$,} 
t_{\bfa}&=p_{\bfx}(p_{\bfy}|_{X})^{-1}: 
E\xto{\ \cong\ } E\,,\quad t_{\bfa}(\bfx) =  \mathfrak l(M_{\iota(a),x}) = \bfx +_{E}\bfa =\bfy
\end{align*} 
when going in the opposite direction. In other words,
\begin{align*}
X&=\{(\bfx,\bfx-_{E}\bfa)\in \PP^{2}_{\bfx}\times \PP^{2}_{\bfy}\mid \bfx\in E\}
=\{(\bfy+_{E}\bfa,\bfy)\in \PP^{2}_{\bfx}\times \PP^{2}_{\bfy}\mid \bfy\in E\}\,.
\end{align*}
\end{sit}

\subsection*{Geometric Interpretation \`a la Dolgachev}
\begin{sit}
Applying the treatment from \cite[4.1.2]{DolgBook} to the special case of plane elliptic curves gives a geometric interpretation of the adjugate of a Moore matrix as follows.

Fixing again $\bfa\in E$, consider the closed embedding
\begin{gather*}
(\mathfrak l,\mathfrak r)_{\bfa}: E\into \PP^{2}\times \PP^{2}\,,\\
(\mathfrak l,\mathfrak r)_{\bfa}(\bfx) = (\mathfrak l(M_{a,x}),\mathfrak r(M_{a,x}))
= (\bfx -_{E} \bfa, -_{E}\bfx -_{E}\bfa)\,,
\end{gather*}
and follow it with the Segre embedding $s_{2}:\PP^{2}\times \PP^{2}\into \PP^{8}$ that sends
$(\bfx,\bfy)$ to the class of the $3\times 3$ matrix $[\bfx^{T}\cdot\bfy]$ viewed as a point in $\PP^{8}$.
The composition $\psi_{\bfa}=s_{2}(\mathfrak l,\mathfrak r)_{\bfa}: E\into \PP^{8}$ then sends 
$\bfx\mapsto \left[M_{\bfa,\bfx}^{\adj}\right] = (\bfx-_{E}\bfa)^{T}\cdot(-_{E}\bfx-_{E}\bfa)$, thus, $E$ gets
embedded into the Segre variety $s_{2}(\PP^{2}\times \PP^{2})\subset \PP^{8}$ through the adjugate of the
Moore matrix. As to the image of $E$ in $\PP^{2}\times \PP^{2}$, if we set $\bfy=\bfx -_{E}\bfa$, then
$\bfx=\bfy+_{E}\bfa$ and $-_{E}\bfx-_{E}\bfa = -_{E}\bfy-_{E}2\cdot_{E}\bfa$ so that 
$(\mathfrak l,\mathfrak r)_{\bfa}(E)$ is the graph of the involution%
\footnote{That Moore matrices define an involution on $E$ in this way we learned from Kristian Ranestad
who kindly shared his notes \cite{Ra} with us.}
$\bfy\mapsto -_{E}\bfy-_{E}2\cdot_{E}\bfa$
on $E$ that one may view as the ``reflection'' in $-_{E}\bfa$.
\end{sit}

\subsection*{Doubling and Tripling Points on $E$}
\begin{sit}
\label{double}
As an immediate application of Theorem \ref{addE} one can easily determine%
\footnote{The formulas for $2\cdot_{E}\bfa$ are already contained in \cite{Frium02}. 
We recall them here for completeness und later use.}
 $2\cdot_{E}\bfa$ and 
$3\cdot_{E}\bfa$ for $\bfa\in E$ in that $2\cdot_{E}\bfa = \mathfrak l(M_{\iota(a),a})$ and
$3\cdot_{E}\bfa =  \mathfrak l(M_{\iota(a),b})$, where $\bfb=2\cdot_{E}\bfa$, and explicit coordinates 
are obtained from the columns of the corresponding adjugate matrices. Now
\begin{align*}
M_{\iota(\bfa),\bfa} = \left(a_{-i-j}a_{i-j}\right)_{i,j\in\ZZ{/}3\ZZ} &=
\left(
\begin{array}{ccc}
a_{0}^{2}&a_{2}^{2}&a_{1}^{2}\\
a_{2}a_{1}&a_{1}a_{0}&a_{0}a_{2}\\
a_{1}a_{2}&a_{0}a_{1}&a_{2}a_{0}
\end{array}
\right)
\intertext{and so}
M_{\iota(\bfa),\bfa}^{\adj} = \left(a_{-i-j}a_{i-j}\right)^{\adj}_{i,j\in\ZZ{/}3\ZZ}
&=
\left(
\begin{array}{ccc}
a_{0}(a_{2}^{3}-a_{1}^{3})\vphantom{\big(}\\
a_{2}(a_{1}^{3}-a_{0}^{3})\vphantom{\Big(}\\
a_{1}(a_{0}^{3}-a_{2}^{3})\vphantom{\big(}
\end{array}
\right)\cdot (0,-1,1)\,,
\end{align*}
whence $2\cdot_{E}\bfa = [a_{0}(a_{2}^{3}-a_{1}^{3}){:} a_{2}(a_{1}^{3}-a_{0}^{3}){:} a_{1}(a_{0}^{3}-a_{2}^{3})]$.
\end{sit}

Next, set $(b_{0}, b_{1}, b_{2})= (a_{0}(a_{2}^{3}-a_{1}^{3}),a_{2}(a_{1}^{3}-a_{0}^{3}), a_{1}(a_{0}^{3}-a_{2}^{3}))$ and evaluate through a straightforward though somewhat lengthy expansion:
\begin{align*}
M_{\iota(a), b}^{\adj} &= \left(a_{-i-j}b_{i-j}\right)^{\adj}_{i,j\in\ZZ{/}3\ZZ} =
\left(
\begin{array}{ccc}
a_{1}a_{2}b_{0}^{2}-a_{0}^{2}b_{1}b_{2}&a_{0}a_{1}b_{1}^{2}-a_{2}^{2}b_{0}b_{2}&a_{0}a_{2}b_{2}^{2}-
a_{1}^{2}b_{0}b_{1}\vphantom{\Big(}\\
a_{0}a_{1}b_{2}^{2}-a_{2}^{2}b_{0}b_{1}&a_{0}a_{2}b_{0}^{2}-a_{1}^{2}b_{1}b_{2}&a_{1}a_{2}b_{1}^{2}-
a_{0}^{2}b_{0}b_{2}
\\
a_{0}a_{2}b_{1}^{2}-a_{1}^{2}b_{0}b_{2}&a_{1}a_{2}b_{2}^{2}-a_{0}^{2}b_{0}b_{1}&a_{0}a_{1}b_{0}^{2}-
a_{2}^{2}b_{1}b_{2}\vphantom{\Big(}
\end{array}
\right)\\
&=
\left(
\begin{array}{c}
a_{0}a_{1}a_{2}(a_{0}^{6}+a_{1}^{6} +a_{2}^{6} - a_{0}^{3}a_{1}^{3} - a_{1}^{3}a_{2}^{3} - a_{0}^{3}a_{2}^{3})
\vphantom{\Big(}\\
a_{0}^{6}a_{1}^{3} + a_{1}^{6}a_{2}^{3} + a_{2}^{6}a_{0}^{3}-3(a_{0}a_{1}a_{2})^{3}\\
a_{0}^{6}a_{2}^{3} + a_{1}^{6}a_{0}^{3} + a_{2}^{6}a_{1}^{3}-3(a_{0}a_{1}a_{2})^{3}
\vphantom{\Big(}
\end{array}
\right)\cdot
(a_{0}, a_{2},a_{1})\,. 
\end{align*}
For an additional check, note that $[a_{0}{:}a_{2}{:}a_{1}]=-_{E}\bfa = -2_{E}\bfa +_{E}\bfa$ so that indeed
$[M_{\iota(\bfa),\bfa}^{\adj}]= (2_{E}\bfa +_{E}\bfa)^{T}(-2_{E}\bfa +_{E}\bfa)=(3_{E}\bfa)^{T}(-_{E}\bfa)$, 
as it has to be.
 
When $a_{0}a_{1}a_{2}\neq 0$, the case we are interested in, these results can be simplified a bit.
\begin{cor}
\label{3fold}
For $a=(a_{0},a_{1},a_{2})$ as above representing a point $\bfa\in E$ with $a_{0}a_{1}a_{2}\neq 0$, doubling, 
respectively tripling $\bfa$ on $E$ results in
\begin{gather*}
2\cdot_{E}\bfa =
\left[
\tfrac{a_{2}^{3}-a_{1}^{3}}{a_{1}a_{2}}{:}
\tfrac{a_{1}^{3}-a_{0}^{3}}{a_{0}a_{1}}{:}
\tfrac{a_{0}^{3}-a_{2}^{3}}{a_{0}a_{2}}\right]\,,\\
3\cdot_{E}\bfa =
\left[
\tfrac{a_{0}^{6}+a_{1}^{6} +a_{2}^{6}}{(a_{0}a_{1}a_{2})^{2}} - \tfrac{a_{0}^{3}a_{1}^{3} + 
a_{1}^{3}a_{2}^{3} + a_{0}^{3}a_{2}^{3}}{(a_{0}a_{1}a_{2})^{2}}{:}
\tfrac{a_{0}^{6}a_{1}^{3} + a_{1}^{6}a_{2}^{3} + a_{2}^{6}a_{0}^{3}}{(a_{0}a_{1}a_{2})^{3}}-3{:}
\tfrac{a_{0}^{6}a_{2}^{3} + a_{1}^{6}a_{0}^{3} + a_{2}^{6}a_{1}^{3}}{(a_{0}a_{1}a_{2})^{3}}-3
\right]\,.
\end{gather*}\qed
\end{cor}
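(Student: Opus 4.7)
The plan is to extract both formulas directly from the computations already performed in (\ref{double}), where the coordinates of $2\cdot_{E}\bfa$ and $3\cdot_{E}\bfa$ are displayed as columns of the rank-one adjugate matrices $M_{\iota(a),a}^{\adj}$ and $M_{\iota(a),b}^{\adj}$, respectively. Under the hypothesis $a_{0}a_{1}a_{2}\neq 0$, projective rescaling by a nonzero power of $a_{0}a_{1}a_{2}$ will convert the unnormalized expressions into the stated rational forms. No new geometric input is required; this is pure bookkeeping with homogeneous coordinates.

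For the first assertion, I read off from (\ref{double}) that
\[
2\cdot_{E}\bfa = [\,a_{0}(a_{2}^{3}-a_{1}^{3}):a_{2}(a_{1}^{3}-a_{0}^{3}):a_{1}(a_{0}^{3}-a_{2}^{3})\,]\,,
\]
and I divide each homogeneous coordinate by $a_{0}a_{1}a_{2}$; this immediately produces the claimed form. For the second assertion, the column factor of $M_{\iota(a),b}^{\adj}$ displayed in (\ref{double}) represents $3\cdot_{E}\bfa$ up to a nonzero scalar (the row factor $(a_{0},a_{2},a_{1})$ corresponds to $-_{E}\bfa$, as confirmed by the consistency check $[M_{\iota(a),b}^{\adj}]=(3\cdot_{E}\bfa)^{T}(-_{E}\bfa)$). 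Dividing all three entries of that column vector by $(a_{0}a_{1}a_{2})^{3}$ gives precisely the stated rational expressions, with the ``$-3$'' in the last two coordinates arising from the $-3(a_{0}a_{1}a_{2})^{3}$ correction term.

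The only subtleties, none of them serious, are: first, being careful that the adjugate decomposition is ``column times row'' so that the \emph{columns} (and not the rows) represent the point in question; second, making sure the rescaling is by the correct power of $a_{0}a_{1}a_{2}$ (a single power for $2\cdot_{E}\bfa$, coming from dividing one cubic factor, and a cube for $3\cdot_{E}\bfa$, matching the total degree of the coordinates). I do not expect any real obstacle, and as a final sanity check one may verify that the rescaled triples satisfy the Hesse equation with $\lambda=(a_{0}^{3}+a_{1}^{3}+a_{2}^{3})/(a_{0}a_{1}a_{2})$.
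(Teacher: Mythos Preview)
Your proposal is correct and is precisely the argument the paper intends: the corollary is stated immediately after the computations in (\ref{double}) with nothing but a \qed, and the ``simplification'' alluded to is exactly the rescaling by $a_{0}a_{1}a_{2}$, respectively $(a_{0}a_{1}a_{2})^{3}$, that you carry out. Your remarks on the column--row factorization of the adjugate and on the powers of $a_{0}a_{1}a_{2}$ are accurate and match the paper's setup.
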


\begin{example}
As an immediate application, one obtains the set of $6$--torsion points on $E$ in that $2\cdot_{E}\bfa$
is a $3$--torsion point if, and only if, $\bfa$ is a $6$--torsion point. Now $E[3] = E\cap V(x_{)}x_{1}x_{2})$
as was noted above. The formulae for doubling a point thus show that
\[
E[6]= E\cap V(x_{0}x_{1}x_{2}(x_{0}^{3}-x_{2}^{3})(x_{1}^{3}-x_{3}^{3})(x_{2}^{3}-x_{1}^{3}))
\]
is the intersection of $E$ with the indicated $12$ lines. As the four lines $V(x_{0}x_{1}x_{2}(x_{2}-x_{1}))$
cut out the $3$-- and $2$--torsion points, the remaining $8$ lines cut out the $24$
primitive $6$--torsion points as stated in \cite{Frium02}. 

It follows that $E[6]\cong \ZZ/6\ZZ\times \ZZ/6\ZZ$
and that all $36$ points of $E[6]$ are defined over $K$, as soon as $\car K\neq 2,3$ and $K$ contains 
three distinct third roots of unity. 
\end{example}

\subsection*{The Algebraic Heisenberg Group}\quad\\
It is a classical result in the theory of elliptic curves that translation by a $3$--torsion point on a smooth cubic 
is afforded by a projective linear transformation; see \cite[\S 5 Case (b)]{Mum66}. 
We first recall the precise result and then show that the action of the relevant algebraic Heisenberg group 
lifts to a free action on the Moore matrices.

\begin{defn}
\label{Heis}
Let $K$ be a field that contains three distinct third roots of unity, 
$\mu_{3}(K) =\{1,\omega,\omega^{2}\}\leqslant K^{*}$ with $\omega^{3}=1$. In terms of the matrices
\begin{align*}
\mathsf\Sigma &= 
\left(
\begin{array}{ccc}
0&0&1\\
1&0&0\\
0&1&0
\end{array}
\right)\,,\quad
T=
\left(
\begin{array}{ccc}
1&0&0\\
0&\omega&0\\
0&0&\omega^{2}
\end{array}
\right)\in \SL(3,K)\,,
\end{align*}
of order $3$, the {\em algebraic Heisenberg group\/} is 
\begin{align*}
\mathsf {Heis}_{3}(K) &= \{\mu T^{i}\mathsf\Sigma^{j}\mid \mu\in K^{*}; i,j\in\ZZ/3\ZZ\}\leqslant \GL(3,K)\,.
\end{align*}
That this is indeed a subgroup is due to the equality $\mathsf \Sigma T = \omega T\mathsf \Sigma$.
This same equality also shows that $\mathsf {H}_{3}$, the subset of $\mathsf {Heis}_{3}$, where $\mu$
is restricted to powers of $\omega$, is a finite subgroup of $\SL(3,K)$ of order $27$.
\end{defn}

The crucial property of the algebraic {Heis}eisenberg group is then the following.

\begin{proposition}
Let $a=(a_{0}, a_{1}, a_{2})$ as before represent a point $\bfa\in E$ with $a_{0}a_{1}a_{2}\neq 0$.
For $a'=(a'_{0}, a'_{1}, a'_{2})\in K^{3}$ the following are equivalent.
\begin{enumerate}[\rm (1)]
\item 
$a'$ represents a point $\bfa'\in E$ with $3\cdot_{E}\bfa' = 3\cdot_{E}\bfa$.
\item
$a'$ is in the $\mathsf {Heis}_{3}$--orbit of $a$, thus, $a'\in \mathsf {Heis}_{3}\cdot a$.
\item
The Moore matrices $M_{a,x}$ and $M_{a',x}$ are equivalent.
\end{enumerate}
Moreover the action of $\mathsf {Heis}_{3}$ on the Moore matrices is free.
\end{proposition}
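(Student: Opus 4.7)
The plan is to establish (2) $\Rightarrow$ (3), (1) $\Leftrightarrow$ (2), and (3) $\Rightarrow$ (1) in that order, and then to verify freeness.

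For (2) $\Rightarrow$ (3), a direct index manipulation using $(M_{a,x})_{ij}=a_{i+j}x_{i-j}$ (indices mod $3$) gives
\[
M_{\mu a,x}=\mu\, M_{a,x},\qquad M_{\Sigma a,x}=\Sigma^{-1}\, M_{a,x}\,\Sigma,\qquad M_{T a,x}=T\cdot M_{a,x}\cdot T,
\]
so the generators $\mu,\Sigma,T$ of $\mathsf{Heis}_3$ act on $M_{a,x}$ by matrix equivalences; hence so does all of $\mathsf{Heis}_3$.

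For (1) $\Leftrightarrow$ (2), I would invoke the classical result \cite[\S 5 Case (b)]{Mum66} that $\mathsf{Heis}_3$ preserves the Hesse cubic $E$ (immediate from the manifest invariance of $x_0^3+x_1^3+x_2^3$ and $x_0x_1x_2$ under $\Sigma$ and $T$) and, modulo its center $\mathbb G_m$, acts on $E$ by translations by 3-torsion points. This action is faithful, since any element of $\PGL_3$ fixing $E$ pointwise must be the identity, so the induced group homomorphism $\mathsf{Heis}_3/\mathbb G_m\to E[3]$ is injective; as both have order $9$ (and $(\ZZ/3)^2$-structure), it is an isomorphism. Therefore the $\mathsf{Heis}_3$-orbits on $E$ are precisely the cosets of $E[3]$, equivalently the fibers of $3\cdot_E\colon E\to E$, giving $a'\in\mathsf{Heis}_3\cdot a \iff 3\cdot_E\bfa = 3\cdot_E\bfa'$.

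The main obstacle is (3) $\Rightarrow$ (1), for which I would combine Beauville's Theorem \ref{beau} with Riemann--Hurwitz. By Theorem \ref{beau}, $\Phi\colon \bfa\mapsto [\cok M_{a,x}]$ is a morphism $E\setminus E[3]\to\Jac^0(E)\setminus\Theta$. By the preceding steps, $\Phi$ is constant on $\mathsf{Heis}_3$-orbits, i.e., on fibers of $3\cdot_E$, so $\Phi=\bar\Phi\circ(3\cdot_E)$ for some morphism $\bar\Phi\colon E\setminus\{\mathfrak o\}\to \Jac^0(E)\setminus\Theta$. Since $E$ is smooth projective and $\Jac^0(E)$ projective, $\Phi$ extends uniquely to a morphism $\tilde\Phi\colon E\to \Jac^0(E)\cong E$ of elliptic curves, which is étale by Riemann--Hurwitz in genus $1$. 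Now $\Theta=\{[\calo_E]\}$, and the condition $H^0(E,L_a)=0$ in Theorem \ref{beau} forces $L_a\not\cong\calo_E$ for every $\bfa\notin E[3]$, so $\tilde\Phi^{-1}([\calo_E])\subseteq E[3]$; combining with the surjectivity of $\tilde\Phi$ and the transitivity of $\mathsf{Heis}_3/\mathbb G_m$ on $E[3]$ established in the previous step, we get $\tilde\Phi^{-1}([\calo_E])=E[3]$. Hence $\deg\tilde\Phi=|E[3]|=9$, so $\bar\Phi$ has degree $1$ and is an isomorphism, yielding $L_a\cong L_{a'}\iff 3\cdot_E\bfa=3\cdot_E\bfa'$.

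Finally, for freeness of the $\mathsf{Heis}_3$-action on Moore matrices, observe that $M_{a,x}$ determines $a\in K^3$ uniquely (each $a_k$ is read off as the coefficient of some monomial in some entry), reducing freeness on matrices to freeness on vectors $a$ with $a_0a_1a_2\neq 0$ representing points of $E$. A short case analysis on $g=\mu T^i\Sigma^j$ with $ga=a$ settles it: if $j=0$, one needs $\mu\omega^{ki}=1$ for $k=0,1,2$, forcing $\mu=1$ and $i=0$; if $j\neq 0$, the fixed-vector equations force $a$ to be proportional to an explicit vector of the form $(1,\mu\omega^i,\mu^2)$ (or a cyclic shift), and substitution into the Hesse equation yields $3+\lambda\omega^{\pm i}=0$, contradicting $\lambda^3+27\neq 0$.
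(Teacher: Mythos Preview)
Your implications $(2)\Rightarrow(3)$ and $(1)\Leftrightarrow(2)$ are correct and match the paper's treatment (the paper simply calls $(1)\Leftrightarrow(2)$ ``classical''; your extra detail is fine). Your freeness argument by direct case analysis on $\mathsf{Heis}_3$ is also correct and more elementary than the paper's, which just invokes Beauville's general freeness result for the $\sfG_3$--action.

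The implication $(3)\Rightarrow(1)$ is where your approach genuinely diverges from the paper's --- and where there is a gap. The paper normalises $M_{a,x}=M_0x_0+M_1x_1+M_2x_2$ with $M_0$ invertible, reduces equivalence of $M_{a,x}$ and $M_{a',x}$ to simultaneous conjugacy of the pairs $(M_0^{-1}M_1,\,M_0^{-1}M_2)$, and then computes three explicit trace invariants of such a pair, showing they coincide with the projective coordinates of $3\cdot_E\bfa$ from Corollary~\ref{3fold}. Your geometric route via the extended map $\tilde\Phi\colon E\to\Jac^0(E)$ is attractive, but both your appeal to Riemann--Hurwitz (``\'etale in genus~$1$'') and your use of surjectivity presuppose that $\tilde\Phi$ is \emph{non-constant}, which you never establish. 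If $\tilde\Phi$ were constant with value $[L_0]\neq[\calo_E]$, then $\tilde\Phi^{-1}([\calo_E])=\varnothing\subseteq E[3]$ holds vacuously and the degree count collapses. Ruling out constancy is exactly the substance of $(3)\Rightarrow(1)$: it amounts to showing that not all Moore matrices are $\sfG_3$--equivalent, and there is no free lunch here.

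You can repair this in two ways. Either compute a single simultaneous-conjugacy invariant (e.g.\ $\tr((N_1N_2)^2)$ as in Lemma~\ref{equiv}) and observe that it varies with $\bfa$ along $E$ --- but then you have essentially recovered the paper's argument. Or, more in the spirit of your approach, note that the Schr\"odinger--representation argument in the paper's ``End of the Proof of Theorem~A'' establishes \emph{independently of this Proposition} that every $L\in\Jac^0(E)\setminus\Theta$ arises as $\cok M_{a,x}$ for some $\bfa$; this gives surjectivity of $\Phi$ outright, hence non-constancy of $\tilde\Phi$, and your Riemann--Hurwitz degree argument then goes through cleanly. As written, however, the surjectivity claim is unjustified.
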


\begin{proof}
The equivalence of (1) and (2) is, of course, classical. For $(2)\Longrightarrow (3)$, it suffices
to show that the Moore matrices for $a,  T(a), \mathsf\Sigma(a)$ and $\mu a, \mu\in K^{*}$, are equivalent. 
This is obvious for $\mu a$ as $M_{\mu a,x}=(\mu\id_{3}) M_{a,x}$.
For $T(a) = (a_{0}, \omega a_{1}, \omega^{2} a_{2})$, the Moore matrix is
\begin{align*}
M_{T(a),x}&=
(\omega^{i+j}a_{i+j}x_{i-j})_{i,j\in\ZZ/3\ZZ}
=
(\omega^{i}a_{i+j}x_{i-j}\omega^{j})_{i,j\in\ZZ/3\ZZ}
= T\cdot M_{a,x}\cdot T\,.
\end{align*}
Thus, $M_{T(a),x}$ is equivalent to $M_{a,x}$.
For $\mathsf\Sigma(a) =(a_{2},a_{0},a_{1})$ one verifies
\begin{align*}
M_{(a_{2},a_{0}, a_{1}),x} &= (a_{i+j-1}x_{i-j})_{i,j\in\ZZ/3\ZZ} = (a_{(i+1)+(j+1)}x_{(i+1)-(j+1)})_{i,j\in \ZZ/3\ZZ}
=\mathsf\Sigma^{-1}\cdot M_{a,x}\cdot \mathsf\Sigma\,,
\end{align*}
whence $M_{\mathsf\Sigma(a),x}$ is indeed as well equivalent to $M_{a,x}$.

It remains to prove $(3)\Longrightarrow (1)$. If $M_{a,x}$ is equivalent to $M_{a',x}$ then these two matrices 
have the same determinant up to a nonzero scalar. This shows that $a'$ represents a point on $E$ along with $a$
and that $a'_{0}a'_{1}a'_{2}\neq 0$. 

Now write
\begin{align*}
M_{a,x} &= M_{0}x_{0} + M_{1}x_{1} + M_{2}x_{2}\,,
\end{align*}
where $M_{i} = \partial M_{a,x}/\partial x_{i}\in \Mat_{3\times 3}(K)$.
The matrix $M_{0}= \diag(a_{0}, a_{2}, a_{1})$ is invertible by assumption and we set
$N_{i} = M_{0}^{-1}M_{i}\in \Mat_{3\times 3}(K)$ and $N = \sum_{i=0}^{2}N_{i}x_{i}= M_{0}^{-1}M_{a,x} $. 

For $a'$ with $\bfa'\in E$ and $a'_{0}a'_{1}a'_{2}\neq 0$, define $N_{i}', N'$ analogously 
with $a'$ replacing $a$. Then the matrices $M_{a,x} , M_{a',x}$ are equivalent 
under the action of $\sfG_{3}$ if, and only if $N$ and $N'$ are equivalent under that action.
As $N_{0}=N_{0}'={\bf 1}_{3}$, the identity matrix, the matrices $N,N'$ are equivalent with respect to $\sfG_{3}$
if, and only if, $N$ and $N'$ are equivalent under conjugation by a matrix $P\in\GL(3, K)$, that is,
$N'=PNP^{-1}$, equivalently, $N_{1}'=PN_{1}P^{-1}$ and $N_{2}' = PN_{2}P^{-1}$.

In other words, the pairs of $3\times 3$ matrices $(N_{1}, N_{2})$ and $(N_{1}',N_{2}')$ are 
related by simultaneous conjugation. Clearly the trace functions $\tr(A_{1}\cdots A_{n})$, for any $n$--tuple
$A_{i}\in \{U,V\}, i=1,..., n$, are constant on the class of a pair $(U,V)\in\Mat_{3\times 3}(K)^{2}$ 
under simultaneous conjugation. Moreover, Teranishi \cite{Teranishi} showed that $11$ of these traces 
suffice to generate the ring of invariants. See \cite{Formanek} for a survey of these results, especially the 
list of the generating traces on the bottom of page 25.

We will not need any details of that invariant theory, but we easily extract from those classical results 
the traces that are relevant here. 

\begin{lemma}
\label{equiv}
With notation as just introduced, set further $a_{ij}=a_{i}/a_{j}$ for $i,j=0,1,2$.
\begin{enumerate}[\rm (i)]
\item The matrices $N_{1}, N_{2}$ have the form
\begin{align*}
N_{1}= 
\left(
\begin{array}{ccc}
0&0&a_{20}\\
a_{12}&0&0\\
0&a_{01}&0
\end{array}
\right)\,,\quad
N_{2}= 
\left(
\begin{array}{ccc}
0&a_{10}&0\\
0&0&a_{02}\\
a_{21}&0&0
\end{array}
\right)\,.
\end{align*}
\item Taking traces yields
\begin{align*}
\tr ((N_{1}N_{2})^{2})&
=\frac{a_{0}^{6}+a_{1}^{6}+a_{2}^{6}}{a_{0}^{2}a_{1}^{2}a_{2}^{2}}\\
\tr (N_{1}^{2}N_{2}^{2})&
=\frac{a_{0}^{3}a_{1}^{3}+a_{0}^{3}a_{2}^{3}+a_{1}^{3}a_{2}^{3}}{a_{0}^{2}a_{1}^{2}a_{2}^{2}}
\\
\tr (N_{1}N_{2}N_{1}^{2}N_{2}^{2})&
=\frac{a_{0}^{6}a_{1}^{3}+a_{1}^{6}a_{2}^{3}+a_{2}^{6}a_{0}^{3}}{a_{0}^{3}a_{1}^{3}a_{2}^{3}}\,.
\end{align*}
\end{enumerate}
\end{lemma}

\begin{proof}
Straightforward verification.
\end{proof}
Combining item (ii) in this Lemma with Corollary \ref{3fold} shows that equivalence of $M_{a,x}$ and $M_{a',x}$ 
forces $3\cdot_{E}\bfa'=3\cdot_{E}\bfa$.

As for the final claim, this follows from Beauville's result that the action of $\sfG_{3}$ on linear matrices is free.
\end{proof}

\begin{cor}
\label{alg{Heis}eis}
The subgroup of $\sfG_{3}$ that transforms Moore matrices into such is isomorphic to the
algebraic Heisenberg group $\mathsf {Heis}_{3}$.\qed
\end{cor}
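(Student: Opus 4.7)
The plan is to construct an explicit isomorphism $\phi\colon \mathsf {Heis}_{3}(K) \to H$, where $H \subseteq \sfG_{3}$ denotes the subgroup of elements that send Moore matrices to Moore matrices. Reading off the three conjugation formulas established in the proof of the previous Proposition, I would define $\phi$ on generators by
\[
\phi(\mu) = [(\mu\id_{3}, \id_{3})]\,, \quad
\phi(T) = [(T, T^{-1})]\,, \quad
\phi(\mathsf\Sigma) = [(\mathsf\Sigma^{-1}, \mathsf\Sigma^{-1})]\,,
\]
so that by construction $\phi(g)\cdot M_{a,x} = M_{g(a),x}$ for each generator $g$.

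The first step is to verify that $\phi$ extends to a well-defined group homomorphism. The relations $T^{3} = \mathsf\Sigma^{3} = 1$ are clearly preserved, and the scalars $\mu\in K^{*}$ are central on both sides, so only the Heisenberg commutation relation $T\mathsf\Sigma = \omega\mathsf\Sigma T$ needs care. A direct product in $\GL(3,K)\times \GL(3,K)$ yields $\phi(T)\phi(\mathsf\Sigma) = [(T\mathsf\Sigma^{-1}, T^{-1}\mathsf\Sigma^{-1})]$ and $\phi(\omega\mathsf\Sigma T) = [(\omega\mathsf\Sigma^{-1}T, \mathsf\Sigma^{-1}T^{-1})]$. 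Substituting the equivalent forms $\mathsf\Sigma^{-1}T = \omega T\mathsf\Sigma^{-1}$ and $\mathsf\Sigma^{-1}T^{-1} = \omega^{-1}T^{-1}\mathsf\Sigma^{-1}$ of the commutation relation shows that the second pair differs from the first by the common scalar $\omega^{2}$ in both coordinates, so the two pairs agree in the quotient $\sfG_{3}$.

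Injectivity is then immediate: if $\phi(g) = 1$ in $\sfG_{3}$ then $\phi(g)$ acts as the identity on every Moore matrix, hence $M_{g(a),x} = M_{a,x}$ for all $a$; the entries of a Moore matrix determine $a$ uniquely, so $g(a) = a$ for all $a$, forcing $g = 1$. For surjectivity onto $H$, given any $[(P,Q)] \in H$, I would fix some $a$ with $a_{0}a_{1}a_{2}\neq 0$ and write $PM_{a,x}Q^{-1} = M_{a',x}$; by the preceding Proposition, $a'$ lies in the $\mathsf {Heis}_{3}(K)$-orbit of $a$, say $a' = g(a)$, so that $\phi(g)$ also sends $M_{a,x}$ to $M_{a',x}$. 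Beauville's freeness result \cite[Prop.~3.3]{Beau00} then forces $[(P,Q)] = \phi(g)$ in $\sfG_{3}$, completing the identification $H = \phi(\mathsf {Heis}_{3}(K))$. The chief obstacle, which is mild, is the scalar bookkeeping arising from the Heisenberg commutation relation when checking well-definedness of $\phi$ modulo $\mathbb G_{m}$.
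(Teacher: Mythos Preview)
Your argument is correct and is essentially a detailed elaboration of what the paper treats as immediate: the paper simply marks the corollary with \qed, regarding it as a direct consequence of the preceding Proposition (which shows that the $\mathsf{Heis}_{3}$--orbit of $a$ coincides with the $\sfG_{3}$--equivalence class of $M_{a,x}$ among Moore matrices, and that the resulting action is free) together with Beauville's freeness result. Your explicit homomorphism $\phi$, the check of the commutation relation modulo the diagonal $\mathbb G_{m}$, and the use of freeness for both injectivity and surjectivity make this reasoning fully transparent, but the underlying strategy is the same.
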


\begin{remark}
In light of the preceding result, we sometimes write $M_{\bfa, x}$ to denote any representative of the equivalence 
class of $M_{a,x}$ under the action of $\mathsf {Heis}_{3}$, with $\bfa\in E$ as before representing the point
underlying $a\in K^{3}$.
\end{remark}

It is indeed the representation theory of the Heisenberg groups that allows us to finish the proof 
of Theorem A. Instead of working with the algebraic Heisenberg groups, it suffices to restrict to the
finite Heisenberg groups and their representations.

\subsection*{The Schr\"odinger Representations of the Finite Heisenberg Groups}\quad\\
\begin{sit}
The general Heisenberg group $\mathsf H(R)$ over a commutative ring $R$ is usually understood to be
the subgroup of unipotent upper triangular $3\times 3$ matrices in $\GL(3,R)$. 
For $R=\ZZ/n\ZZ$, $n\geqslant 1$ an integer, we call these the {\em finite Heisenberg groups\/} and abbreviate 
$\mathsf H_{n}= \mathsf H(\ZZ/n\ZZ)$. The group $\mathsf H_{n}$ is of order $n^{3}$ and admits the presentation
\begin{align*}
\mathsf H_{n} =\langle \sigma,\tau\mid [\sigma,[\sigma,\tau]] =
[\tau,[\sigma,\tau]]= \sigma^{n}=\tau^{n}=1\rangle\,.
\end{align*}

Each element of $\mathsf H_{n}$ has a unique representation as $[\sigma,\tau]^{r}\sigma^{s}\tau^{t}$ with
$r,s,t\in\ZZ/n\ZZ$.

Note that $\mathsf H_{3}$ as defined here is indeed isomorphic to the group $\mathsf H_{3}$
that we exhibited as a subgroup of $\mathsf{Heis}_{3}$ above.
\end{sit}

\begin{sit}
Over a field $K$ that contains a primitive $n^{th}$ root of unity $\zeta\in K^{*}$, the group $\mathsf H_{n}$ 
carries the $K$--linear {\em Schr\"odinger representations} $\rho_{j}:\mathsf H_{n}\to \GL(n,K)$, parametrized by 
$j\in\ZZ/n\ZZ$, that in a suitable {\em Schr\"odinger basis\/}  $v_{i}, i\in \ZZ/n\ZZ,$ of a vector space $V$ of 
dimension $n$ over $K$ are given by
\begin{gather*}
\rho_{j}(\sigma)(v_{i})= v_{i-1}\,,\quad \rho_{j}(\tau)(v_{i})=\zeta^{ij}v_{i}\,,\quad i\in \ZZ/n\ZZ\,,
\intertext{and thus, for a general element,}
\rho_{j}([\sigma,\tau]^{r}\sigma^{s}\tau^{t})(v_{i})=\zeta^{j(it+r)}v_{i-s}\,.
\end{gather*}

In particular, the character $\chi_{j}$ of the representation $\rho_{j}$ satisfies
\begin{align*}
\chi_{j}([\sigma,\tau]^{r}\sigma^{s}\tau^{t})&=
\begin{cases}
0&\text{if $s\not\equiv 0\bmod n$ or $jt\not\equiv 0\bmod n$}\\
n\zeta^{jr}&\text{if $s\equiv jt\equiv 0\bmod n$.}
\end{cases}
\end{align*}
\end{sit}

\begin{remark}
For a complete and detailed discussion of the irreducible representations of the finite Heisenberg groups 
see \cite{GH01}.
\end{remark}

\begin{sit}
If $d\geqslant 2$ is a divisor of $n$, say $n=dm$, then the subgroup of $\mathsf H_{n}$ generated by
$\sigma^{m},\tau^{m}$ is a homomorphic image of $\mathsf H_{d}$, in that surely
$\sigma^{m}$ and $\tau^{m}$ are of order $d$, and these elements commute with
$[\sigma^{m},\tau^{m}] =  [\sigma,\tau]^{m^{2}}$. If we restrict the Schr\"odinger representation 
$\rho_{j}$ of $\mathsf H_{n}$ along the resulting homomorphism $\mathsf H_{d}\to \mathsf H_{n}$, then it 
decomposes in that the actions of $\sigma^{m}$ and $\tau^{m}$, given by
\begin{align*}
\rho_{j}(\sigma^{m})(v_{i}) &= v_{i-m}\,,\quad \rho_{j}(\tau^{m})(v_{i}) =(\zeta^{m})^{ij}v_{i}\,\quad\text{for 
$i\in\ZZ/n\ZZ$,} 
\end{align*}
yield the $\mathsf H_{d}$--subrepresentations 
\begin{align*}
W_{jk}&= \bigoplus_{i\equiv k\bmod m}Kv_{i}\subseteq V\quad\text{for $k\in \ZZ/m\ZZ$.}
\end{align*}
For $i=\alpha m +k$, in the basis
$w_{\alpha} = v_{\alpha m +k}$, for $\alpha=0,...,d-1,$ of $W_{jk}$ the action is given by
\begin{align*}
\rho_{j}(\sigma^{m})(w_{\alpha})&=
w_{\alpha-1}\,,\\
\rho_{j}(\tau^{m})(w_{\alpha})&= (\zeta^{m})^{j(\alpha m+k)}w_{\alpha}\,,\quad \text{and, for a general element}\\
\rho_{j}([\sigma^{m},\tau^{m}]^{r}(\sigma^{m})^{s}(\tau^{m})^{t})(w_{\alpha})&= 
\rho_{j}([\sigma,\tau]^{m^{2}r}\sigma^{ms}\tau^{mt})(w_{\alpha}) \\
&= (\zeta^{m})^{j((\alpha m+k)t+mr)}w_{\alpha-s}\,.
\end{align*}
The corresponding character is thus
\begin{align*}
\rho_{j}([\sigma^{m},\tau^{m}]^{r}(\sigma^{m})^{s}(\tau^{m})^{t}) 
&=
\begin{cases}
0&\text{if $s\not\equiv 0\bmod d$ or $jmt\not\equiv 0\bmod d$}\\
d(\zeta^{m})^{j(kt+mr)}&\text{if $s\equiv jmt\equiv 0\bmod d$.}
\end{cases}
\end{align*}
If $\gcd(d,m)=1$, then $jmt\equiv 0\bmod d$ if, and only if $jt\equiv 0\bmod d$ and one recognizes
the Schr\"odinger representation $\rho_{jm}$ of $\mathsf H_{d}$. Therefore, we have the following result.
\end{sit}

\begin{lemma}
\label{restrict}
For $d$ a positive divisor of $n$ with $\gcd(d,n/d)=1$, under the group homomorphism 
$\mathsf H_{d}\to \mathsf H_{n}$ described above the Schr\"odinger representation $\rho_{j}$ of 
$\mathsf H_{n}$ restricts to the direct sum of $n/d$ copies of the Schr\"odinger representation 
$\rho_{(jn/d)\bmod d}$ of $\mathsf H_{d}$.\qed
\end{lemma}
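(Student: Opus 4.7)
My plan is to deduce the lemma from the character computation already assembled immediately before its statement. Write $m=n/d$ and $j'\equiv jm\bmod d$. The excerpt exhibits the decomposition $V=\bigoplus_{k=0}^{m-1}W_{jk}$ of $V$ into $\mathsf H_d$--stable subspaces, each of dimension $d$, and computes the character $\chi_{W_{jk}}$ of each summand. What remains is to identify $\chi_{W_{jk}}$ with the Schr\"odinger character $\chi_{j'}$ of $\mathsf H_d$ associated to the primitive $d$-th root of unity $\zeta^{m}\in K^{*}$.

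First I would isolate the arithmetic consequence of the coprimality hypothesis. Since $\gcd(d,m)=1$, multiplication by $m$ is a bijection of $\ZZ/d\ZZ$, so the condition $jmt\equiv 0\bmod d$ appearing in the formula for $\chi_{W_{jk}}$ is equivalent to $jt\equiv 0\bmod d$, and equivalent to $j't\equiv 0\bmod d$ because $j'\equiv jm\bmod d$. In particular the vanishing loci of $\chi_{W_{jk}}$ and of $\chi_{j'}$ on $\mathsf H_{d}$ coincide.

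Next I would compare values on the non-vanishing locus. On the set where $s\equiv 0\bmod d$ and $jt\equiv 0\bmod d$, the extra factor $(\zeta^{m})^{jkt}$ that distinguishes the formula for $\chi_{W_{jk}}$ from a bare power of the commutator is automatically trivial: indeed $d\mid jt$ forces $d\mid jkt$, hence $(\zeta^{m})^{jkt}=1$. Thus on this locus
\begin{align*}
\chi_{W_{jk}}\bigl([\sigma^{m},\tau^{m}]^{r}(\sigma^{m})^{s}(\tau^{m})^{t}\bigr)
=d(\zeta^{m})^{jmr}=d(\zeta^{m})^{j'r},
\end{align*}
which is exactly the value of $\chi_{j'}$ at the corresponding element of $\mathsf H_{d}$. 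Since characters determine representations (the order of $\mathsf H_{d}$ is $d^{3}$, invertible in $K$ under the standing assumption that $K$ contains a primitive $n$-th root of unity), we conclude $W_{jk}\cong\rho_{j'}$ as $\mathsf H_{d}$-modules, and summing over $k=0,\dots,m-1$ gives $\rho_{j}\big|_{\mathsf H_{d}}\cong\rho_{j'}^{\oplus n/d}$.

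The only non-routine point is the role of the hypothesis $\gcd(d,n/d)=1$: without it, both the equivalence of $jmt\equiv 0$ and $j't\equiv 0$ modulo $d$, and the automatic vanishing of the twist $(\zeta^{m})^{jkt}$ on the support of the character, can fail, and the restriction need no longer be a sum of Schr\"odinger representations of a single type. I expect writing the characters side by side, rather than attempting to exhibit an explicit intertwiner (which would require rescaling the basis $w_{\alpha}$ to absorb the auxiliary scalar $(\zeta^{m})^{jk}$ appearing in the action of $\tau^{m}$), to be by far the cleanest presentation.
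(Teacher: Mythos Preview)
Your proposal is correct and follows essentially the same route as the paper: the discussion preceding the lemma already computes the character of each summand $W_{jk}$, and the paper concludes by noting that under $\gcd(d,m)=1$ ``one recognizes the Schr\"odinger representation $\rho_{jm}$''. You have simply made that recognition explicit by matching characters, including the observation (left implicit in the paper) that the $k$--dependent factor $(\zeta^{m})^{jkt}$ is trivial on the support because $d\mid jt$ there.
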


\begin{sit}
\label{action}
Returning to elliptic curves, let, more generally, $L$ be an ample line bundle on an abelian variety defined 
over an algebraically closed field $K$ whose characteristic does not divide the degree $n>0$ of $L$.
It is a deep result from the theory of abelian varieties; see \cite[Prop. 3.6]{Mum91} for the general case or
\cite{Hulek} for an explicit treatment over the complex numbers; 
that then the vector space of sections of $L$ comes naturally  equipped with the Schr\"odinger representation 
$\rho_{1}$ of $\mathsf H_{n}$ --- in fact this is the restriction of the Schr\"odinger representation of the 
larger algebraic Heisenberg group $\mathsf{Heis}_{n}$ that is defined in analogous fashion to $\mathsf{Heis}_{3}$. 

In case $L$ is a line bundle on an elliptic curve $E$ over $K$  this representation lifts the translation by 
$n$--torsion points on $E$, thus, the action of $E[n]\cong \ZZ/n\ZZ\times \ZZ/n\ZZ$ on $\PP(H^{0}(E,L))$ 
to an action by linear automorphisms on $V=H^{0}(E,L)$. For an elliptic curve $E$, embedded as
a smooth projective plane cubic curve, the special case $L=\calo_{E}(1)$ with $n=\deg L = 3$ was discussed 
in detail above.
\end{sit}

Using the preceding Lemma, the following result is an easy consequence of the fundamental fact
just recalled. 

\begin{proposition}
\label{tensor}
Let $\call,\call'$ be locally free sheaves of degree $3$ and $\call''$ a locally free sheaf of degree $6$ 
on an elliptic curve $E$ over an algebraically closed field $K$ whose characteristic does not divide $6$.
\begin{enumerate}[\rm (a)]
\item 
\label{tensora}
Restricting the translations by $6$--torsion points to the $3$--torsion points restricts the
representation $\rho_{1}$ of $\mathsf H_{6}$ on $H^{0}(E,\call'')$ to the direct sum of two copies of the
Schr\"odinger representation $\rho_{2}$ of $\mathsf H_{3}$.

\item
\label{tensorb}
The tensor product $H^{0}(E,\call)\otimes_{K}H^{0}(E,\call')$ of the Schr\"odinger representations
$\rho_{1}$ of $\mathsf H_{3}$ on each of the two factors decomposes as the direct sum of three copies of
the Schr\"odinger representation $\rho_{2}$ of $\mathsf H_{3}$.

\item 
\label{tensorc}
With $\call''=\call\otimes_{\calo_{E}}\call'$, the natural multiplication map on global sections
represents a surjective $\mathsf H_{3}$--equivariant homomorphism
\begin{align*}
H^{0}(E,\call)\otimes_{K}H^{0}(E,\call') \lto {\res}{\big\downarrow^{\mathsf H_{6}}_{\mathsf H_{3}}} H^{0}(E,\call'')\,.
\end{align*}
In particular, the kernel of that homomorphism is a Schr\"odinger representation $\rho_{2}$ of
$\mathsf H_{3}$.
\end{enumerate}
\end{proposition}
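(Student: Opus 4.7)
The plan is to treat (a) as essentially a direct specialization of Lemma \ref{restrict}, to establish (b) by matching characters on the finite group $\mathsf H_3$, and then to derive (c) by combining (a) and (b) with surjectivity of the multiplication map via a dimension count and complete reducibility.

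For (a), I would apply Lemma \ref{restrict} with $n=6$, $d=3$, $m=n/d=2$, and $j=1$. Since $\gcd(d,m)=\gcd(3,2)=1$, the hypothesis of the lemma is met, and the restriction of $\rho_{1}$ of $\mathsf H_{6}$ along the embedding $\mathsf H_{3}\hookrightarrow \mathsf H_{6}$ (sending the standard generators of $\mathsf H_3$ to the squares of those of $\mathsf H_6$) decomposes as $n/d=2$ copies of $\rho_{(jn/d)\bmod d}=\rho_{2\bmod 3}=\rho_{2}$ of $\mathsf H_{3}$. This is exactly the assertion, after identifying the geometric restriction of translations by $E[6]$ to translations by $E[3]$ with the algebraic restriction along $\mathsf H_3 \hookrightarrow \mathsf H_6$ coming from the inclusion $E[3]\subset E[6]$.

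For (b), I would use a character computation. The diagonal action of $\mathsf H_{3}$ on $H^{0}(E,\call)\otimes_{K}H^{0}(E,\call')$ (each factor carrying $\rho_{1}$) has character $\chi(g)=\chi_{1}(g)^{2}$. Using the character formula recorded after Definition \ref{Heis}, one finds, for $g=[\sigma,\tau]^{r}\sigma^{s}\tau^{t}$, that $\chi_{1}(g)^{2}=9\zeta^{2r}$ when $s\equiv t\equiv 0\bmod 3$ and $0$ otherwise. On the other hand, for $\rho_{2}$ one has $\chi_{2}(g)=3\zeta^{2r}$ when $s\equiv 0$ and $2t\equiv 0\bmod 3$—which, as $\gcd(2,3)=1$, forces $t\equiv 0$—and $0$ otherwise. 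Hence $3\chi_{2}(g)$ agrees with $\chi_{1}(g)^{2}$ on every element of $\mathsf H_{3}$. Since $|\mathsf H_{3}|=27$ is invertible in $K$ (as $\car K \neq 3$), Maschke's theorem applies and equality of characters forces isomorphism as representations.

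For (c), I would first argue surjectivity of the multiplication map: since $E$ has genus $1$ and $\deg\call, \deg\call'\geqslant 2g+1=3$, each is normally generated and, by the classical multiplication lemma for line bundles of sufficiently positive degree on a curve (Mumford, Castelnuovo), the multiplication $H^{0}(E,\call)\otimes_{K}H^{0}(E,\call')\twoheadrightarrow H^{0}(E,\call\otimes_{\calo_E}\call'')$ is onto. Equivariance under the $\mathsf H_{3}$-actions is then the content of the compatibility of the theta-group constructions under tensor product (\cite[\S 1]{Mum91}): fixing lifts $\phi_{x}\colon t_{x}^{*}\call\xto{\sim}\call$ and $\phi'_{x}\colon t_{x}^{*}\call'\xto{\sim}\call'$ for $x\in E[3]$, the tensor product $\phi_{x}\otimes\phi'_{x}$ provides a lift for $t_{x}^{*}\call''\xto{\sim}\call''$ and the multiplication of sections intertwines the two actions by construction. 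Restricted along $\mathsf H_{3}\hookrightarrow \mathsf H_{6}$, the target is by (a) isomorphic to $\rho_{2}^{\oplus 2}$, and by (b) the source is isomorphic to $\rho_{2}^{\oplus 3}$; complete reducibility for $\mathsf H_{3}$ and a dimension count ($9-6=3$) force the kernel to be one copy of the irreducible representation $\rho_{2}$.

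The main obstacle is the equivariance claim in (c): one must pin down precisely which lifts of translations to $\call$, $\call'$ and $\call''$ are being used so that the identifications coming from (a) and (b) are compatible with multiplication. Once this is settled by appealing to the functoriality of the theta-group under tensor products of line bundles—essentially that the Heisenberg action on $H^{0}(\call\otimes\call')$ restricts, under $E[3]\hookrightarrow E[6]$, to the diagonal of the Heisenberg actions on $H^{0}(\call)$ and $H^{0}(\call')$—the remainder of the argument is purely formal: a character match and a dimension count.
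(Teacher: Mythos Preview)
Your argument is correct. Part (a) is exactly the paper's one-line application of Lemma \ref{restrict}, and part (c) matches the paper's reasoning (surjectivity ``well known'', equivariance from compatibility of translation with tensor products), with your version being somewhat more careful about the theta-group identification.

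The genuine difference is in part (b). You prove $\rho_1\otimes\rho_1\cong\rho_2^{\oplus 3}$ by the character identity $\chi_1^2=3\chi_2$, invoking Maschke. The paper instead writes down an explicit Schr\"odinger basis: for each $(a_0,a_1,a_2)\in K^3$ it sets $f_0=a_0x_0y_0+a_1x_2y_1+a_2x_1y_2$ and $f_{-i}=(\sigma')^i(f_0)$, checks that $(f_0,f_1,f_2)$ is a $\rho_2$-basis, and then specializes $(a_0,a_1,a_2)$ to the standard basis vectors to get the decomposition into three copies. The paper even remarks that your character approach would work just as well. What the explicit construction buys, however, is the identification of the $f_i$ with the trilinear forms of (\ref{ATV}); this is precisely what is invoked at the end of the proof of Theorem~A to recognize the presentation matrix of $L$ as a Moore matrix $M_{a,x}$. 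Your character argument, while cleaner for the proposition as stated, would need to be supplemented by that explicit basis when one turns to Theorem~A.
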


\begin{proof}
Part (\ref{tensora}) is Lemma \ref{restrict} applied to the case $j=1, n=6, d=3$, thus $n/d=2$.

For Part (\ref{tensorb}), let $(x_{0}, x_{1}, x_{2})$ be a Schr\"odinger basis of $V=H^{0}(E,\call)$ and
$(y_{0}, y_{1}, y_{2})$ be a Schr\"odinger basis of $V'=H^{0}(E,\call')$. With $x_{i}y_{j}=x_{i}\otimes y_{j}$
and $a_{0},a_{1},a_{2}\in K$, the tensor $f_{0} = a_{0}x_{0}y_{0}+a_{1}x_{2}y_{1}+a_{2}x_{1}y_{2}$
is a fixed vector for the action of $\tau' =\rho_{1}(\tau)\otimes\rho_{1}(\tau)$ on $V\otimes_{K}V'$.
Abbreviating also $\sigma' =\rho_{1}(\sigma)\otimes\rho_{1}(\sigma)$, with $f_{-i}=(\sigma')^{i}(f_{0}),
i\in\ZZ/3\ZZ$, one has
\begin{align*}
f_{0} &= a_{0}x_{0}y_{0}+a_{1}x_{2}y_{1}+a_{2}x_{1}y_{2}\,,&\tau'(f_{0})&= f_{0}\\
f_{1} &= a_{2}x_{2}y_{0}+a_{0}x_{1}y_{1}+a_{1}x_{0}y_{2}\,,& \tau'(f_{1})&=\omega^{2} f_{1}\,,\\
f_{2} &= a_{1}x_{1}y_{0}+a_{2}x_{0}y_{1}+a_{0}x_{2}y_{2}\,,\quad& \tau'(f_{2}) &= \omega f_{2}\,.
\end{align*}
Therefore, $f_{0}, f_{1}, f_{2}$ form indeed a Schr\"odinger basis for a representation of 
$\mathsf H_{3}$ that is equivalent to $\rho_{2}$ as soon as $(a_{0},a_{1},a_{2})\neq (0,0,0)\in K^{3}$. 
Choosing in turn $(a_{0},a_{1},a_{2}) = e_{i}$, for $i\in\ZZ/3\ZZ$ and $(e_{i})_{i=0,1,2}$ the standard 
basis of $K^{3}$, it follows that indeed 
$\rho_{1}\otimes_{K}\rho_{1}\cong \rho_{2}^{\oplus 3}$ as $\mathsf H_{3}$--representations --- which fact 
could have been established as well by just looking at the corresponding group characters. 
The reader will note that viewed as trilinear forms, the $f_{i}$ are precisely the forms from (\ref{ATV}) above.

In Part (\ref{tensorc}), surjectivity of the multiplication map is well known and the 
$\mathsf H_{3}$--equivariance follows as translation is compatible with tensor products,
$t_{\bfa}(\call)\otimes_{\calo_{E}}t_{\bfa}(\call')\cong t_{\bfa}(\call\otimes_{\calo_{E}}\call')$ for any
point $\bfa\in E$. Applied to $3$--torsion or $6$--torsion points and using that translations by those points manifest 
themselves through the Schr\"odinger representation $\rho_{1}$ of $\mathsf H_{3}$, respectively $\mathsf H_{6}$,
the proof of the Proposition is complete.
\end{proof}

\subsection*{End of the Proof of Theorem A}
\begin{sit}
Let $L$ be a line bundle of degree $0$ on the smooth cubic curve 
$E\subset \PP^{2}$ with defining equation $f=0$.
According to Beauville's result stated above in Theorem \ref{beau}, if $H^{0}(E,L)=0$ then  
there exists a $3 \times 3$ linear matrix $M$ such that $f=\det M$ and an exact
sequence of $\calo_{\PP^{2}}$--modules
\begin{align*}
\xymatrix{
0\ar[r]&\calo_{\PP^{2}}(-2)^{3}\ar[r]^-{M}&\calo_{\PP^{2}}(-1)^{3}\ar[r]&L\ar[r]&0\,.
}
\end{align*}
Twisting this sequence by $\calo_{E}(1)$, respectively $\calo_{E}(2)$, and taking sections, one can identify
this exact sequence as
\begin{align*}
\xymatrix{
0\ar[r]&\calo_{\PP^{2}}(-2)\otimes_{K}W\ar[r]^-{M}&\calo_{\PP^{2}}(-1)\otimes_{K}H^{0}(E,L(1))\ar[r]&L\ar[r]&0\,,
}
\end{align*}
where $W$ is the kernel of the $\mathsf H_{3}$--equivariant multiplication map 
\[
H^{0}(E,\calo_{E}(1))\otimes_{K}H^{0}(E,L(1))\lto \res\big\downarrow _{\mathsf H_{3}}^{\mathsf H_{6}}H^{0}(E,L(2))
\] 
as in Proposition \ref{tensor}(\ref{tensorc}) above for $\call=\calo_{E}(1), \call'=L(1)$. 

Choosing Schr\"odinger bases $f_{0}, f_{1}, f_{2}$ for $W$,
$x_{0}, x_{1},x_{2}$ for $H^{0}(E,\calo_{E}(1))$, and $y_{0}, y_{1}, y_{2}$ for $H^{0}(E,L(1))$ as in the proof of
Proposition \ref{tensor}(\ref{tensorb}), $M$ becomes identified with a Moore matrix 
$M_{a,x}=(a_{i+j}x_{i-j})_{i,j\in\ZZ/3\ZZ}$ for some $a= (a_{0}, a_{1}, a_{2})\in K^{3}$.
As $\det M\doteq f$ by Beauville's result, it follows that $\bfa\in E$ with $a_{0}a_{1}a_{2}\neq 0$.
This completes the proof of Theorem A from the introduction.\qed
\end{sit}

\section{Proof of Theorem B}
The starting point is the following result from Atiyah's seminal paper \cite{Atiyah57}.
\subsection*{A Result of Atiyah and Ulrich Bundles}
\begin{theorem}[cf. Atiyah {\cite[Thm. 5 (ii)]{Atiyah57}}]
\label{Atiyah}
Let $F$ be an indecomposable vector bundle of rank $2$ on an elliptic curve $E$ over a field $K$. If
$\deg F=0$, then there exists a unique line bundle $L$ of degree $0$ that fits into an exact sequence
of vector bundles
\begin{align*}
\xymatrix{
0\ar[r]&L\ar[r]&F\ar[r]&L\ar[r]&0
}
\end{align*}
Moreover, $H^{0}(E,F)=0$ if, and only if, $H^{0}(E,L)=0$.

Conversely, if $L$ is a line bundle of degree $0$ then there exists an indecomposable
vector bundle $F$, unique up to isomorphism and necessarily of rank $2$ and degree $0$, that fits 
into such an exact sequence.\qed
\end{theorem}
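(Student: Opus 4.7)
The plan is to decouple the two directions of the equivalence and treat the cohomological dichotomy separately, exploiting that on an elliptic curve $E$ one has $\omega_{E}\cong\calo_{E}$, so Serre duality reads $H^{1}(E,\calm)\cong H^{0}(E,\calm^{-1})^{*}$, and that a non-trivial line bundle of degree $0$ has vanishing cohomology.

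For the converse direction (given $L$, construct $F$), I first compute $\Ext^{1}_{\calo_{E}}(L,L)\cong H^{1}(E,\calo_{E})\cong K$, so there is, up to scalar, a unique non-trivial extension class $\xi$; the resulting $F$ has rank $2$ and degree $0$ automatically. To certify $F$ indecomposable I compute $\End(F)$: applying $\Hom(L,-)$ to the extension and noting that cup product with $\xi$ realizes an isomorphism $\Hom(L,L)\xto{\cong}\Ext^{1}(L,L)$, one finds $\Hom(L,F)\cong K$, generated by the inclusion; dually $\Hom(F,L)\cong K$, generated by the projection. A dimension count gives $\dim_{K}\End(F)=2$, and since $\End(F)$ contains the identity together with the nilpotent composition $F\twoheadrightarrow L\hookrightarrow F$, it must equal $K[t]/(t^{2})$, which is local, so $F$ is indecomposable. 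Uniqueness of $L$ given $F$ follows because any sub-line-bundle $L'\subset F$ of degree $0$ either factors through $L\hookrightarrow F$, giving a degree-$0$ injection $L'\to L$ and hence an isomorphism, or maps non-trivially to the quotient copy of $L$, yielding the same conclusion; and uniqueness of $F$ given $L$ follows from the one-dimensionality of $\Ext^{1}(L,L)$, since rescaling $\xi$ produces isomorphic extensions.

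For the forward direction (given indecomposable $F$ of rank $2$, degree $0$, produce $L$), the key is to establish semi-stability of $F$. A destabilizing sub-line-bundle $L'\subset F$ with $\deg L'>0$ would give $F/L'=L''$ of negative degree; then
\[
\Ext^{1}(L'',L')\cong H^{1}(E,L'\otimes (L'')^{-1})\cong H^{0}(E,L''\otimes (L')^{-1})^{*}=0
\]
by Serre duality, so the extension splits, contradicting indecomposability. A Jordan--H\"older filtration then produces $0\to L_{1}\to F\to L_{2}\to 0$ with $L_{1},L_{2}$ line bundles of degree $0$, and if $L_{1}\not\cong L_{2}$ the same Serre-duality argument gives $\Ext^{1}(L_{2},L_{1})=0$, again splitting $F$; hence $L_{1}\cong L_{2}=:L$.

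Finally, the cohomological dichotomy reads off from the long exact sequence
\[
0\to H^{0}(L)\to H^{0}(F)\to H^{0}(L)\xto{\delta}H^{1}(L)\to H^{1}(F)\to H^{1}(L)\to 0.
\]
If $L\not\cong\calo_{E}$ then $H^{0}(L)=0$, forcing $H^{0}(F)=0$; if $L\cong\calo_{E}$ then $H^{0}(L)\cong H^{1}(L)\cong K$ and $\delta$ is cup product with the non-zero extension class $\xi\in H^{1}(\calo_{E})$, hence an isomorphism, so $H^{0}(F)\cong K\neq 0$. The step I expect to be the main obstacle is the semi-stability of $F$ in the forward direction, as this is where the elliptic nature of $E$ (via $\omega_{E}\cong\calo_{E}$) enters most substantively; the remaining steps reduce to standard cohomology computations on $E$.
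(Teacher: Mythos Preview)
The paper does not supply a proof of this theorem; it is quoted from Atiyah and closed with a \qed, so there is no argument in the paper to compare against. Your proof is largely sound---the $\Ext$ computations, the identification $\End(F)\cong K[t]/(t^{2})$, the uniqueness arguments, and the cohomological dichotomy via the connecting map as cup product with $\xi$ are all correct---but there is one genuine gap in the forward direction. After showing $F$ is semistable you invoke a Jordan--H\"older filtration to produce $0\to L_{1}\to F\to L_{2}\to 0$ with $L_{1},L_{2}$ line bundles of degree $0$. This tacitly assumes $F$ is not itself stable: if it were, its Jordan--H\"older filtration would be trivial and no such sub-line-bundle would exist. That no stable rank-$2$, degree-$0$ bundle exists on an elliptic curve is precisely the content you are trying to establish here, so it cannot be assumed.

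The gap closes with an elementary argument. Fix $p\in E$; semistability of $F^{*}$ gives $h^{1}(F(p))=h^{0}(F^{*}(-p))=0$, so $h^{0}(F(p))=\chi(F(p))=2$. If some nonzero $s\in H^{0}(F(p))$ vanishes at $p$ then $s\in H^{0}(F)$ and $\calo_{E}\hookrightarrow F$, a degree-$0$ sub-line-bundle. Otherwise evaluation at $p$ is injective on $H^{0}(F(p))$, so for independent $s_{1},s_{2}$ the section $s_{1}\wedge s_{2}\in H^{0}(\det F(2p))$ is nonzero; being a section of a degree-$2$ line bundle it vanishes at some point $q$, where a nontrivial combination $\alpha s_{1}+\beta s_{2}$ vanishes, giving a nonzero section of $F(p-q)$ and hence an injection $\calo_{E}(q-p)\hookrightarrow F$ from a degree-$0$ line bundle. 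Either way $F$ is strictly semistable, and your argument then goes through.
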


\begin{sit}
By our Theorem A we know that a line bundle of degree $0$ with no nonzero sections is obtained as
$L=\coker M_{a,x}$, where $M_{a,x}$ is a Moore matrix, $a\in K^{3}$ representing
a point $\bfa\in E$ on the elliptic curve $E\subset \PP^{2}$ with $a_{0}a_{1}a_{2}\neq 0$. 
We fix these data in the following.
\end{sit}

Let $S=K[x_{0},x_{1},x_{2}]$ be the homogeneous coordinate ring of $\PP^{2}(K)$, with its homogeneous 
components $S_{m}=H^{0}(\PP^{2}, \calo_{\PP^{2}}(m))$ the vector space over $K$ of homogeneous polynomials of degree $m\in\ZZ$.

Applying the functor $\Gamma_{*} = \oplus_{i\in\ZZ}H^{0}(\PP^{2},(\ )(i))$ to the exact sequence of 
coherent $\calo_{\PP^{2}}$--modules
\begin{align*}
\xymatrix{
0\ar[r]&\calo_{\PP^{2}}(-2)^{3}\ar[r]^-{M_{a,x}}&\calo_{\PP^{2}}(-1)^{3}\ar[r]&L\ar[r]&0\,,
}
\end{align*}
yields a short exact sequence of graded $S$--modules
\begin{align*}
\xymatrix{
0\ar[r]&S(-2)^{3}\ar[r]^-{M_{a,x}}&S(-1)^{3}\ar[r]&\Gamma_{*}(L)\ar[r]&0\,.
}
\end{align*}

The module $\mathbf L= \Gamma_{*}(L)$, cokernel of the map between graded free $S$--modules 
represented by $M_{a,x}$, is an {\em Ulrich module\/} of rank one over the homogeneous coordinate ring 
$R=S/(f)$ of $E$, and each Ulrich module over $R$ of rank one (and generated in degree $1$)
can be so realized by Theorem A. 

\subsection*{Matrix Factorizations and Extensions}\quad\\
In view of Atiyah's result cited above, our aim here is to find a similar description for Ulrich modules over $R$ 
of rank two, namely the one stated in Theorem B. To simplify notation a bit, we fix for now the point $a$ and 
set $A=M_{a,x}, B=M_{a,x}^{\adj}$, viewed as matrices over $S$. The pair $(A,B)$ represents a matrix 
factorization of $f=\det A\in S$ and so, by \cite{Ei}, $\mathbf L$ admits the graded $R$--free resolution
\begin{align*}
\xymatrix{
0&\ar[l] \mathbf L&\ar[l]R(-1)^{3}&\ar[l]_-{A}R(-2)^{3}&\ar[l]_-{B}R(-4)^{3}&\ar[l]_-{A}R(-5)^{3}&\ar[l]_-{B}\cdots
}
\end{align*}
that is $2$--periodic up to the shift in degrees by $-\deg f=-3$.

\begin{sit}
Now consider an element%
\footnote{We write $\Extgr_{R}$ for the extensions in the category of graded $R$--modules with degree--preserving 
$R$--linear maps.}
 of $\Extgr^{1}_{R}(\mathbf L, \mathbf L(m))$ for some $m\in\ZZ$. It can be represented
by a homotopy class of morphisms between graded free resolutions and, invoking again 
\cite{Ei}, such morphisms and their
homotopies can again be chosen to be $2$--periodic so that one has a diagram as follows 
\begin{align*}
\xymatrix{
\mathbf L\ar[d]&0&
\ar[l]R(-1)^{3}\ar[d]\ar@{.>}[dr]|-{U}&
\ar[l]_-{A}R(-2)^{3}\ar[d]^-{C}\ar@{.>}[dr]|-{V}&
\ar[l]_-{B}R(-4)^{3}\ar[d]^-{D}\ar@{.>}[dr]|-{U}&
\ar[l]_-{A}R(-5)^{3}\ar[d]^-{C}\ar@{.>}[dr]|-{V}&
\ar[l]_-{B}\cdots\\
\mathbf L(m)[1]&0&\ar[l]0&\ar[l]R(m-1)^{3}&\ar[l]^-{-A}R(m-2)^{3}&\ar[l]^-{-B}R(m-4)^{3}&
\ar[l]^-{-A}\cdots
}
\end{align*}
Here
\begin{itemize}
\item $C\in\Mat_{3\times 3}(S_{m+1}),D\in\Mat_{3\times 3}(S_{m+2})$ are $3\times 3$ matrices whose entries are 
homogeneous poynomials of the indicated degrees.
\item The pair of matrices $(C,D)$ satisfies
$AD+CB = \mathbf 0 = DA+BC$
over $S$, with $ \mathbf 0$ the zero matrix, and so defines a morphism of complexes over $R$. 
\item
$U,V\in\Mat_{3\times 3}(S_{m})$ represent the possible homotopies, in that the morphisms of complexes
$\mathbf L\to \mathbf L(m)$ induced by
\begin{align*}
C'&= C+ UA-AV\,,\\
D'&= D + VB-BV
\end{align*}
run through the homotopy class of $(C,D)$ for the various choices of $U,V$.
\end{itemize}
\end{sit}

\begin{sit}
Given a pair of matrices $(C,D)$ with $AD+CB=\mathbf 0=DA+BC$ as above, the block matrices
\begin{align*}
\left(
\begin{array}{cc}
A&C\\
0&A
\end{array}
\right)\,,\quad
\left(
\begin{array}{cc}
B&D\\
0&B
\end{array}
\right)
\end{align*}
constitute a matrix factorization of $f$ and give rise to the commutative diagram of graded $S$--modules
with exact rows and columns
\begin{align*}
\xymatrix{
&0\ar[d]&&0\ar[d]&0\ar[d]\\
0\ar[r]&S(m-2)^{3}\ar[d]_{in_{1}}\ar[rr]^-{A}&&S(m-1)^{3}\ar[r]\ar[d]_{in_{1}}&\mathbf L(m)\ar[r]\ar[d]&0\\
0\ar[r]&S(m-2)^{3}\oplus S(-2)^{3}\ar[d]_{pr_{2}}
\ar[rr]^-{
{\left(
\begin{array}{cc}
\scriptstyle A&\scriptstyle C\\
\scriptstyle 0&\scriptstyle  A
\end{array}
\right)}}
&&S(m-1)^{3}\oplus S(-1)^{3}\ar[r]\ar[d]_{pr_{2}}
&\mathbf F\ar[r]\ar[d]&0\\
0\ar[r]&S(-2)^{3}\ar[d]\ar[rr]^-{A}&&S(-1)^{3}\ar[r]\ar[d]&\mathbf L\ar[r]\ar[d]&0\\
&0&&0&0
}
\end{align*}
with the rightmost column representing the extension defined by $(C,D)$ over $R$.
\end{sit}

The following observation cuts down considerably on the work of finding solutions to
the equations $AD+CB=\mathbf 0 = DA+BC$, whenever $(A,B)$ is a matrix factorization of a non-zero-divisor $f$ in a commutative ring $S$.

\begin{lemma}
Assume $A,B\in \Mat_{n\times n}(S)$ is a matrix factorization of a non-zero-divisor $f\in S$, in that
$AB=f\id_{n}=BA$. For a matrix $C\in  \Mat_{n\times n}(S)$ the following are equivalent.
\begin{enumerate}[\rm(a)]
\item 
\label{D}
There exists a matrix $D\in  \Mat_{n\times n}(S)$ such that $AD+CB=\mathbf 0$.
\item
\label{D'}
There exists a matrix $D'\in  \Mat_{n\times n}(S)$ such that $D'A+BC=\mathbf 0$.
\item
\label{D''}
There exists a matrix $D''\in  \Mat_{n\times n}(S)$ such that $fD''+BCB=\mathbf 0$. Equivalently, each entry of 
$BCB\in \Mat_{n\times n}(S)$ is divisible by $f$.
\end{enumerate}
If either equivalent condition holds then $D=D'=D''$ and that matrix is the unique one satisfying $fD=-BCB$.
Moreover, one can recover $C$ from $D$ in that $C$ is the unique matrix such that $fC = -ADA$.
\end{lemma}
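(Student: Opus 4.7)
The proof is a sequence of short manipulations in $\Mat_{n\times n}(S)$, exploiting that $AB=BA=f\id_n$ and that $f$ is a non-zero-divisor, so multiplication by $f$ on matrices is injective. The central observation is: if one multiplies the equation $AD+CB=\mathbf 0$ on the left by $B$, the term $BAD$ collapses to $fD$, producing the one-sided identity $fD+BCB=\mathbf 0$. This already tells us that the candidate $D$ is forced to be the unique matrix satisfying $fD=-BCB$, so uniqueness of $D$ will fall out of each implication, and the three conditions will collapse to the single divisibility condition ``$f$ divides every entry of $BCB$''.

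First I would do (\ref{D})$\Rightarrow$(\ref{D''}): multiply $AD+CB=\mathbf 0$ on the left by $B$ to get $fD+BCB=\mathbf 0$, so $D=D''$ works and $f$ divides $BCB$ entrywise. Conversely, for (\ref{D''})$\Rightarrow$(\ref{D}), start from $fD''+BCB=\mathbf 0$, multiply on the left by $A$, use $AB=f\id_n$ to obtain $f\cdot AD''+fCB=\mathbf 0$, and cancel $f$ (allowed because $f$ is a non-zero-divisor on $S$, hence on $\Mat_{n\times n}(S)$ entrywise) to conclude $AD''+CB=\mathbf 0$. The equivalence (\ref{D'})$\Leftrightarrow$(\ref{D''}) is entirely symmetric: multiply $D'A+BC=\mathbf 0$ on the right by $B$ to get $fD'+BCB=\mathbf 0$, and conversely multiply $fD''+BCB=\mathbf 0$ on the right by $A$ and cancel $f$.

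Having run these four one-line implications, one reads off simultaneously that whenever any of (\ref{D}), (\ref{D'}), (\ref{D''}) holds, the three candidate matrices coincide with the unique solution of $fX=-BCB$, yielding $D=D'=D''$. The final assertion that $fC=-ADA$ is symmetric: multiply $AD+CB=\mathbf 0$ on the right by $A$, use $BA=f\id_n$, and cancel $f$.

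\textbf{Anticipated obstacle.} There is essentially no obstacle; the only subtle point is being explicit that ``$f$ is a non-zero-divisor in $S$'' implies the map $f\cdot(-)\colon \Mat_{n\times n}(S)\to \Mat_{n\times n}(S)$ is injective, so that cancellation of $f$ from matrix identities such as $f\cdot AD''=-f\cdot CB$ is legitimate. This is purely entrywise and deserves at most one sentence.
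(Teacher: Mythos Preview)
Your proposal is correct and follows essentially the same route as the paper: left/right multiplication by $B$ or $A$ to pass between the three conditions, with cancellation of $f$ justified by its being a non-zero-divisor. The only cosmetic differences are that the paper argues uniqueness of $D$ via injectivity of $A$ (from $BA=f\id_n$) rather than directly from the equation $fX=-BCB$, and derives $fC=-ADA$ by multiplying $fD=-BCB$ on both sides by $A$ rather than multiplying $AD+CB=\mathbf 0$ on the right by $A$; both are trivially equivalent.
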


\begin{proof}
If $AD+CB=\mathbf 0$ then multiplying from the left with $B$ yields
\begin{align*}
0 = BAD + BCB = fD + BCB\,,
\end{align*}
whence $BCB\equiv \mathbf 0\bmod (f)$ and one can take $D''=D$. 
Conversely, if that congruence holds then there exists a matrix $D''$ 
with $fD''=-BCB$. Multiplying this equation with $A$ from the left results in
\begin{align*}
AfD'' + ABCB = f(AD''+CB) =\mathbf 0\,.
\end{align*}
As $f$ is a non-zero-divisor, this implies $AD''+CB=\mathbf 0$, whence one can take $D=D''$.
Thus (\ref{D}) $\Longleftrightarrow$ (\ref{D''}). The equivalence  (\ref{D'}) $\Longleftrightarrow$ (\ref{D''}) is
completely analogous.
Uniqueness of $D$ follows as $AD_{1} + CB= AD_{2} + CB$ implies $A(D_{1}-D_{2})=0$. 
Now the linear map represented by $A$ is injective because $f\id_{n} = BA$ and multiplication with 
$f$ is injective by assumption. 
Thus, $D_{1}=D_{2}$ as claimed and, in particular, $D=D''$ must hold. Analogously one must have $D'=D''$. 

Concerning the final assertion, multiply the equation $fD=-BCB$ on both sides with $A$ to obtain
$ADAf = -fCf$, thus, $fC=-ADA$, as $f$ is a non-zero-divisor. Uniqueness of $C$ follows as above.
\end{proof}
If $A$, as in our case of interest, is a determinantal representation of a reduced polynomial,
one can reduce the description of extensions further.
\begin{lemma}
If the determinant $f=\det A\in S$, for a matrix $A \in \Mat_{n\times n}(S)$, is reduced, then with $B=A^{\adj}$ 
one has for any matrix $C\in \Mat_{n\times n}(S)$ that
\begin{align*}
BCB \equiv \tr(BC)B\bmod (f)\,,
\end{align*}
and $BCB\equiv \mathbf 0\bmod (f)$ if, and only if, $\tr(BC)\equiv 0\bmod (f)$.
\end{lemma}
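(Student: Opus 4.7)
\emph{Proof plan.} The plan is to reduce modulo $f$ and work in the ring $R=S/(f)$, which is reduced by hypothesis. Since $R$ is reduced, to verify $BCB\equiv\tr(BC)B\pmod{f}$ it suffices to check the identity of matrices after localization at each minimal prime $\mathfrak{p}\subset R$, where the localization is simply the residue field $\kappa(\mathfrak{p})$. The problem thus reduces to a linear-algebra computation over a field.

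The heart of the argument is to establish that $B$ has rank exactly one over $\kappa(\mathfrak{p})$ for every minimal prime $\mathfrak{p}$ of $R$. Writing $f=f_{1}\cdots f_{k}$ as a product of pairwise coprime irreducibles (using that $S=K[x_{0},x_{1},x_{2}]$ is a UFD and $f$ is reduced), the minimal primes of $R$ correspond to the height-one primes $(f_{i})\subset S$. If $B$ vanished modulo some $f_{i}$, all of its entries would be divisible by $f_{i}$, giving $B=f_{i}B'$ for some $B'\in\Mat_{n\times n}(S)$; then $AB=f\id_{n}$ would force $AB'=(f/f_{i})\id_{n}$, and comparing determinants yields $f_{i}\det(B')=(f/f_{i})^{n-1}$, contradicting the coprimality of $f_{i}$ and $f/f_{i}$ (for $n\geqslant 2$). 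A parallel argument rules out $\rank A\leqslant n-2$ at $(f_{i})$, since the entries of $B$ are, up to signs, the $(n-1)$-minors of $A$. Hence $\rank_{\kappa(\mathfrak{p})}B=1$, and we may write $B=uv^{T}$ over $\kappa(\mathfrak{p})$ for some column $u$ and row $v^{T}$.

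With this rank-one factorization the calculation is immediate:
\begin{align*}
BCB \;=\; uv^{T}Cuv^{T} \;=\; (v^{T}Cu)\,uv^{T} \;=\; (v^{T}Cu)\,B,
\end{align*}
while cyclic invariance of trace gives $\tr(BC)=\tr(uv^{T}C)=v^{T}Cu$. Thus $BCB=\tr(BC)B$ in each $\kappa(\mathfrak{p})$, and reducedness of $R$ promotes this to an equality in $R$, proving the congruence. For the equivalence: if $BCB\equiv 0\pmod{f}$, then $\tr(BC)B\equiv 0$ in $R$; since $B\ne 0$ in each $\kappa(\mathfrak{p})$ (rank one), the scalar $\tr(BC)$ must vanish at every minimal prime, hence vanishes in the reduced ring $R$. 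The converse is immediate from the established congruence.

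The main obstacle is the rank-one verification, which rests on the somewhat delicate combination of reducedness of $f$ with unique factorization in $S$; once this is in place, the identity follows from the two-line matrix calculation above.
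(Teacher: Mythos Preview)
Your proof is correct and follows the same overall strategy as the paper's: reduce modulo $f$, show that $B$ has rank one over suitable residue fields, and then verify the identity by the two-line rank-one computation $BCB=(v^{T}Cu)\,uv^{T}=\tr(BC)\,B$. The one genuine difference lies in how rank-one-ness of $B$ is established. The paper works at smooth closed points $x\in V(f)$ and appeals to the fact that $\coker A$ is locally free of rank $1$ there, so $\rank A(x)=n-1$ and hence $\rank B(x)=1$; density of the smooth locus in the reduced hypersurface then propagates the identity. You instead work at the generic points of the components (the minimal primes of $R=S/(f)$) and give a direct unique-factorization argument: if $B\equiv 0\bmod f_{i}$ then $B=f_{i}B'$ and $AB'=(f/f_{i})\id_{n}$ forces $f_{i}\mid (f/f_{i})^{n-1}$, a contradiction. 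Your route is a bit more self-contained (it avoids invoking the local structure of $\coker A$ and makes the role of reducedness fully explicit through the factorization $f=f_{1}\cdots f_{k}$), while the paper's geometric phrasing is more compact; both arrive at exactly the same field computation.
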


\begin{proof}
As $f$ is reduced, it is generically regular. For a regular point $x\in V(f)$ this implies that
$\rank A(x) = n-1$, thus, $\rank B(x) = 1$, as the cokernel of $A$ is locally free of rank $1$ at such point.
Accordingly there are vectors $u,v\in k(x)^{n}$ such that $B(x) = u^{T}\cdot v$. Therefore,
\begin{align*}
B(x)C(x)B(x) =  u^{T}\cdot v\cdot C(x)\cdot u^{T}\cdot v\,.
\end{align*}
Now $v\cdot C(x)\cdot u^{T}$ is an element of the residue field $k(x)$ at $x$ and so, 
considering it as a $1\times 1$ matrix,
\begin{align*}
v\cdot C(x)\cdot u^{T} =\tr(v\cdot C(x)\cdot u^{T})=\tr(u^{T}\cdot v\cdot C(x)) =\tr(B(x)C(x))\,.
\end{align*}
Embedding this observation into the right-hand side of the previous equality, it follows that
\begin{align*}
B(x)C(x)B(x) =  u^{T}\cdot v\cdot C(x)\cdot u^{T}\cdot v = \tr(B(x)C(x)) u^{T}\cdot v = \tr(B(x)C(x)) B(x)\,.
\end{align*}
Therefore, $BCB - \tr(BC)B$ vanishes at each regular point of $\{f=0\}$, thus, it vanishes everywhere
on that hypersurface. Moreover, $B(x)\neq \mathbf 0$ at each regular point, whence at such points
$BCB(x) = \mathbf 0$ if, and only if, $\tr(B(x)C(x)) = 0$. The claim follows.
\end{proof}

\begin{sit}
\label{ext1}
Putting these facts together, we get the following description of the extension groups we are interested in here:
\begin{align}
\tag{$*$}
\label{ext1*}
\Extgr^{1}_{R}(\mathbf L,\mathbf L(m)) \cong
\frac{\left\{C\in \Mat_{3\times 3}(S_{m+1})\mid \tr(M_{a,x}^{\adj}\cdot C)\equiv 0\bmod f\right\}}
{\left\{UM_{a,x}-M_{a,x}V\mid U,V\in  \Mat_{3\times 3}(S_{m})\right\}}
\end{align}

This description shows immediately that $\Extgr^{1}_{R}(\mathbf L,\mathbf L(m))=0$ when $m<-1$, because
there is then no nonzero choice for $C$. It also shows that there are no nonzero homotopies for $m = -1$, 
a fact we will exploit below. Concerning shifts by $m\geqslant -1$, we determine the size of the extension group
directly in terms of possible  Yoneda extensions, that is, short exact sequences, as follows. 
Given a short exact sequence
\begin{align*}
\xymatrix{
0\ar[r]&\mathbf L(m)\ar[r]&\mathbf M\ar[r]&\mathbf L\ar[r]&0\hphantom{\,.}
}
\end{align*}
of graded $R$--modules, sheafifying it yields a short exact sequence of $\calo_{E}$--modules,
\begin{align*}
\xymatrix{
0\ar[r]&L(m)\ar[r]& M\ar[r]& L\ar[r]&0\,.
}
\end{align*}
Conversely, applying $\Gamma_{*}$ to such a short exact sequence of
$\calo_{E}$--modules with $m\geqslant -1$ returns a short exact sequence of graded $R$--modules 
as above in that the connecting homomorphism in cohomology $H^{0}(E,L(i))\to H^{1}(E,L(m+i))$ is $0$ for 
each $i\in\ZZ$. Indeed, for $i\leqslant 0$, one has $H^{0}(E,L(i))=0$, while for $i>0$ one has 
$i+m\geqslant 0$, whence $H^{1}(E,L(m+i))=0$. 

It follows that $\Gamma_{*}$ and sheafification yield inverse isomorphisms between
$\Ext^{1}_{E}(L,L(m))$ and $\Extgr^{1}_{R}(\mathbf L,\mathbf L(m))$ for $m\geqslant -1$.

Further,
$\Ext^{1}_{E}(L,L(m))\cong H^{1}(E,\calo_{E}(m))$ vanishes for $m > 0$,
while for $m=-1$ that vector space is Serre--dual to $H^{0}(E,\calo_{E}(1))\cong S_{1}$ and for $m=0$, 
of course, $H^{1}(E,\calo_{E})\cong K$.
\end{sit}

We thus have the following result.

\begin{lemma}
\label{dimensions}
Let $\mathbf L$ be the Ulrich module that is the cokernel of the Moore matrix $M_{a,x}$
as in Theorem A.
The vector spaces of graded self-extensions of $\mathbf L$ over $R$ satisfy
\begin{align*}
\dim_{K}\Extgr^{1}_{R}(\mathbf L,\mathbf L(m)) &=
\begin{cases}
3&\text{for $m=-1$,}\\
1&\text{for $m= 0$,}\\
0&\text{else.}
\end{cases}
\end{align*}
\qed
\end{lemma}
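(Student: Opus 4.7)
The plan is to combine the ingredients already assembled in (\ref{ext1}); essentially no new technical work is needed, only bookkeeping of the three relevant cases. First, for $m\leqslant -2$ one reads off the vanishing directly from the presentation (\ref{ext1*}): the numerator consists of $3\times 3$ matrices with entries in $S_{m+1}$, and $S_{m+1}=0$ as soon as $m+1<0$, so the extension group is zero.

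For $m\geqslant -1$ I would invoke the isomorphism
\[
\Extgr^{1}_{R}(\mathbf L,\mathbf L(m))\cong \Ext^{1}_{E}(L,L(m))
\]
established in (\ref{ext1}) via the vanishing of the connecting homomorphism $H^{0}(E,L(i))\to H^{1}(E,L(m+i))$ for all $i\in\ZZ$. Since $L$ is a line bundle, the right-hand side simplifies to $H^{1}(E,\calo_{E}(m))$, and the problem reduces to the computation of these cohomology groups on the smooth plane cubic $E$.

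These computations are standard: since $E$ is elliptic, its canonical bundle is trivial, so Serre duality yields $H^{1}(E,\calo_{E}(m))\cong H^{0}(E,\calo_{E}(-m))^{*}$. For $m\geqslant 1$ the bundle $\calo_{E}(-m)$ has negative degree $-3m$, so its space of sections is zero. For $m=0$ one has $H^{1}(E,\calo_{E})\cong K$ since $E$ has genus one. Finally, for $m=-1$, Serre duality gives $H^{1}(E,\calo_{E}(-1))\cong H^{0}(E,\calo_{E}(1))^{*}\cong S_{1}^{*}$, a three-dimensional $K$-vector space corresponding to the global linear forms on $\PP^{2}$ restricted to $E$. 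Assembling these three computations with the vanishing for $m\leqslant -2$ yields the stated dimension table.

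The main obstacle is effectively absent here, as the genuinely substantive work---the translation of $\Extgr^{1}_{R}$ into the matrix-theoretic form (\ref{ext1*}), the comparison with sheaf Ext via $\Gamma_{*}$, and the identification of the matrix factorization---was already carried out in (\ref{ext1}) and the two preceding lemmas. What remains is the invocation of Serre duality on $E$; the only point requiring a moment of care is verifying that the connecting homomorphisms in cohomology really do vanish for every $i$, which follows from the Ulrich property $H^{0}(E,L)=H^{0}(E,L(-1))=0$ together with the standard vanishing $H^{1}(E,\calo_{E}(n))=0$ for $n\geqslant 1$.
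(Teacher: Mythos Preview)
Your proposal is correct and follows essentially the same route as the paper: the discussion in (\ref{ext1}) already contains the whole argument, and the lemma is stated there with a bare \qed. Both you and the paper read off vanishing for $m<-1$ from (\ref{ext1*}), then pass to $\Ext^{1}_{E}(L,L(m))\cong H^{1}(E,\calo_{E}(m))$ for $m\geqslant -1$ and invoke Serre duality. One small slip in your final sentence: the vanishing you need on the target side is $H^{1}(E,L(n))=0$, not $H^{1}(E,\calo_{E}(n))=0$; in particular the borderline case $m=-1,\ i=1$ requires $H^{1}(E,L)=0$, which holds because $L\not\cong\calo_{E}$.
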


With these preparations we now determine $\Extgr^{1}_{R}(\mathbf L,\mathbf L(-1))$.

\begin{proposition}
Let $\mathbf L$ be the cokernel of a Moore matrix $M_{a,x}:S(-2)^{3}\lto S(-1)^{3}$ for $a\in K^{3}$
 with $a_{0}a_{1}a_{2}\neq 0$
representing a point $\bfa\in E$. 
The three-dimensional vector space
$\Extgr^{1}_{R}(\mathbf L,\mathbf L(-1))$ over $K$ is isomorphic to the space of specialized Moore matrices
\begin{align*}
M_{(a_{0}(a_{2}^{3}-a_{1}^{3}), a_{2}(a_{1}^{3}-a_{0}^{3}), a_{1}(a_{0}^{3}-a_{2}^{3})),(s,t,u)}\,,
\quad (s,t,u)\in K^{3}\,.
\end{align*}
Note that $(a_{0}(a_{2}^{3}-a_{1}^{3}), a_{2}(a_{1}^{3}-a_{0}^{3}), a_{1}(a_{0}^{3}-a_{2}^{3}))$ represents
the point $2\cdot_{E}\bfa\in E$, whence this set of matrices consists of all $K$--rational specializations of
$M_{2\cdot_{E}\bfa,x}$.
\end{proposition}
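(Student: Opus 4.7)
The plan is to apply formula (\ref{ext1*}) in the case $m=-1$ and then to exhibit an explicit $3$-dimensional family of Moore matrices inside the resulting kernel.

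First, I would specialize (\ref{ext1*}) to $m=-1$. The homotopy denominator $\{UM_{a,x}-M_{a,x}V:U,V\in\Mat_{3\times 3}(S_{-1})\}$ is trivial, since $S_{-1}=0$. The numerator is the space of constant matrices $C\in\Mat_{3\times 3}(K)$ with $\tr(M_{a,x}^{\adj}\cdot C)\equiv 0\bmod f$; because the trace lies in $S_2$ while $(f)\cap S_2=0$, this congruence reduces to the exact identity $\tr(M_{a,x}^{\adj}\cdot C)=0$. Hence
\[
\Extgr^1_R(\mathbf L,\mathbf L(-1))\;\cong\;V:=\{C\in\Mat_{3\times 3}(K):\tr(M_{a,x}^{\adj}\cdot C)=0\}\,,
\]
and $\dim_K V=3$ by Lemma \ref{dimensions}.

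Next, set $b=(a_0(a_2^3-a_1^3),\,a_2(a_1^3-a_0^3),\,a_1(a_0^3-a_2^3))$ and consider the linear map $\mu\colon K^3\to\Mat_{3\times 3}(K)$ sending $(s,t,u)\mapsto M_{b,(s,t,u)}$. The vector $b$ is nonzero---were all three entries to vanish, then $a_0^3=a_1^3=a_2^3$, and combined with $a\in E$ and $a_0a_1a_2\neq 0$ this would force $\lambda^3+27=0$, contradicting smoothness of $E$. Hence $\mu$ is injective and its image $U$ is a $3$-dimensional subspace of $\Mat_{3\times 3}(K)$.

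The main step is to verify the containment $U\subseteq V$, i.e., that $\tr(M_{a,x}^{\adj}\cdot M_{b,(s,t,u)})=0$ holds for all $(s,t,u)\in K^3$. After writing both matrices in the index set $\ZZ/3\ZZ\times\ZZ/3\ZZ$ and reindexing via $p=i+j$, $q=j-i$ (a bijection because $2$ is invertible modulo $3$), the trace collapses to
\[
\sum_{q\in\ZZ/3\ZZ}y_q\bigl(\alpha\,x_q^2-\beta\,x_{q-1}x_{q+1}\bigr),
\]
where $\alpha=\sum_pb_pa_{p-1}a_{p+1}$ and $\beta=\sum_pb_pa_p^2$. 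Vanishing of this expression for all $y$ is equivalent to the two scalar identities $\alpha=0$ and $\beta=0$, which after substituting the explicit formulae for $b_0,b_1,b_2$ become polynomial identities in $a_0,a_1,a_2$ that follow from the Hesse equation $a_0^3+a_1^3+a_2^3+\lambda a_0a_1a_2=0$. Together with the dimension count this yields $U=V$, and the geometric identification $[b_0{:}b_1{:}b_2]=2\cdot_E\bfa$ is Corollary \ref{3fold}.

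The principal obstacle is this last polynomial verification: while entirely elementary, it is somewhat tedious. One could potentially shorten it by invoking the $\mathsf H_3$-equivariance of the Moore matrix assignment (both $U$ and $V$ being stable under the Heisenberg action established in Section 2), or by recognizing $\alpha$ and $\beta$ as specializations of the adjugate identities underlying the doubling formula for $2\cdot_E\bfa$ derived in \ref{double}.
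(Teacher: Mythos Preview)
Your approach is precisely the paper's: specialize the description (\ref{ext1*}) to $m=-1$, observe that the homotopy denominator vanishes and that the congruence $\tr(M_{a,x}^{\adj}C)\equiv 0\bmod f$ becomes an equality in $S_2$, then check that the three specialized Moore matrices land in the kernel and conclude by the dimension count of Lemma~\ref{dimensions}. Your reindexing via $p=i+j$, $q=j-i$, collapsing the trace to $\sum_q y_q(\alpha x_q^2-\beta x_{q-1}x_{q+1})$, is a tidy way to carry out what the paper calls a ``direct verification'', and your closing suggestion to recognize $\alpha$ and $\beta$ through the doubling formula in \ref{double} is exactly the paper's ``conceptual explanation''.

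There is, however, a genuine gap at the verification step --- one that your explicit formulas actually expose, and that the paper's proof shares. The scalars
\[
\alpha=\sum_{p}b_{p}a_{p-1}a_{p+1}\,,\qquad \beta=\sum_{p}b_{p}a_{p}^{2}
\]
do \emph{not} vanish, neither identically nor modulo the Hesse equation: for $a=(1,1,2)$ one finds $b=(7,0,-7)$ and $\alpha=7$, $\beta=-21$, so $\tr(M_{a,x}^{\adj}M_{b,(1,0,0)})=7x_0^2+21x_1x_2\neq 0$. The source of the slip is that the diagonal of $M_{b,(1,0,0)}$ is $(b_0,b_2,b_1)$, not $(b_0,b_1,b_2)$ as the paper's conceptual argument tacitly assumes. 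What \emph{does} vanish, identically in $a$, is the pair
\[
\alpha'=\sum_{p}b_{-p}a_{p-1}a_{p+1}=a_0a_1a_2\!\sum_{p}(a_{p+1}^3-a_{p-1}^3)=0\,,\qquad
\beta'=\sum_{p}b_{-p}a_{p}^{2}=\sum_{p}a_p^3(a_{p+1}^3-a_{p-1}^3)=0\,,
\]
and these are precisely the kernel conditions $M_{\iota(a),a}\cdot b^{T}=0$ from \ref{double}. In other words, your argument (and the paper's) goes through verbatim once $M_{b,(s,t,u)}$ is replaced by $M_{\iota(b),(s,t,u)}$, i.e.\ once $2\cdot_E\bfa$ is replaced by $-2\cdot_E\bfa$; no appeal to the Hesse equation is then needed.
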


\begin{proof}
Set $b=(a_{0}(a_{2}^{3}-a_{1}^{3}), a_{2}(a_{1}^{3}-a_{0}^{3}), a_{1}(a_{0}^{3}-a_{2}^{3}))$ and note that
\begin{align*}
M_{b,(s,t,u)} &= M_{b,(1,0,0)}s+M_{b,(0,1,0)}t+M_{b,(0,0,1)}u\,. 
\end{align*}
Because $b\neq (0,0,0)$, the matrices $M_{0}=M_{b,(1,0,0)}, M_{1}=M_{b,(0,1,0)}, M_{2}= M_{b,(0,0,1)}$ 
are clearly linearly independent in the vector space $\Mat_{3\times 3}(K)$ in that their nonzero entries 
are located at different positions in these matrices. Further, as $\tr(B\cdot -)$ is an $S$--linear function on 
$\Mat_{3\times 3}(S)$, and $\Extgr^{1}_{R}(\mathbf L,\mathbf L(-1))$ is known to be of dimension $3$ over $K$, 
it suffices to show that for each of the three matrices $M_{i}$ one has 
$\tr(BM_{i})\equiv 0\bmod(f)$, where $B= M_{a,x}^{\adj}$. In fact, as the entries of $BM_{i}$ are quadratic 
polynomials, but $f$ is of degree $3$, the congruence is equivalent to $\tr(BM_{i})=0$.

One now verifies this easily directly for the three matrices in question.

For a more conceptual explanation of the identities $\tr(BM_{i})=0$, note that, say,
$M_{0} = M_{b,(1,0,0)}=\diag(b)$ is a diagonal matrix with the coordinates $b$ on the diagonal
representing $2\cdot_{E}\bfa$. On the other hand, the diagonal elements in $B=M_{a,x}^{\adj}$ involve only the 
quadratic monomials $x_{0}^{2}$ and $x_{1}x_{2}$, and the coefficients of $x_{0}^{2}$ along the diagonal are the 
entries from the third row, those of $x_{1}x_{2}$ the entries from the first row of $M_{\iota(a),a}$; see
(\ref{double}). 
The column vector $b^{T}$ spans the kernel of that matrix by construction. 
As the trace $\tr(BM_{0})$ is the scalar product of the two diagonals, the vanishing of the trace becomes obvious. 
The case of the remaining two matrices yields to analogous arguments.
\end{proof}
\subsection*{The Selfextensions of an Ulrich Line Bundle}\quad\\
Next we turn to $\Extgr^{1}_{R}(\mathbf L,\mathbf L)$, the extension group we are really interested in.
\begin{defn}
With notation as in the preceding proof, set
\begin{align*}
M_{b,y} &= M_{b,(1,0,0)}y_{0}+M_{b,(0,1,0)}y_{1}+M_{b,(0,0,1)}y_{2}\,,
\end{align*}
for $y=(y_{0}, y_{1}, y_{2})\in S_{1}^{3}$, a vector of linear forms from $S$, and define the {\em divergence\/} 
of $M_{b,y}$ to be
\begin{align*}
\mathsf{div}(M_{b,y}) &=
\frac{\partial y_{0}}{\partial x_{0}} + \frac{\partial y_{1}}{\partial x_{1}} + \frac{\partial y_{2}}{\partial x_{2}}\in K\,.
\end{align*}
Note in particular that $\mathsf{div}(M_{b,x}) = 3\in K$, thus, is not zero when $\car(K)\neq 3$. For a 
characteristic--free statement, note that $\mathsf{div}(M_{b,\iota(x)}) = 1\in K$.
\end{defn}

\begin{theorem}
Let $\mathbf L$ be the cokernel of a Moore matrix $M_{a,x}:S(-2)^{3}\lto S(-1)^{3}$ for $a\in K^{3}$
representing a point $\bfa\in E$ with $a_{0}a_{1}a_{2}\neq 0$ and set 
$b=(a_{0}(a_{2}^{3}-a_{1}^{3}), a_{2}(a_{1}^{3}-a_{0}^{3}), a_{1}(a_{0}^{3}-a_{2}^{3}))$ as before. 

The one-dimensional vector space
$\Extgr^{1}_{R}(\mathbf L,\mathbf L)$ over $K$ can be realized as
\begin{align*}
\Extgr^{1}_{R}(\mathbf L,\mathbf L)\cong 
\frac{\left\{M_{b,y}\in \Mat_{3\times 3}(S_{1})\right\}}
{\left\{M_{b,y}\in \Mat_{3\times 3}(S_{1})\right\}\cap\left\{UM_{a,x}-M_{a,x}V\mid U,V\in  \Mat_{3\times 3}(K)\right\}}
\end{align*}
and the divergence $M_{b,y}\mapsto \mathsf{div}(M_{b,y})\in K$ induces an isomorphism
\begin{align*}
\Extgr^{1}_{R}(\mathbf L,\mathbf L)\xto[\cong]{\ \mathsf{div}\ } K\,. 
\end{align*}
\end{theorem}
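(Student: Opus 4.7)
I would use the description in (\ref{ext1*}) specialized to $m=0$, namely
\[
\Extgr^{1}_{R}(\mathbf{L},\mathbf{L})\;\cong\;\frac{\{C\in\Mat_{3\times 3}(S_{1}):\tr(M_{a,x}^{\adj}C)\equiv 0\bmod f\}}{\{UM_{a,x}-M_{a,x}V:U,V\in\Mat_{3\times 3}(K)\}},
\]
and show that the $9$-dimensional subspace of Moore matrices $\{M_{b,y}:y\in S_{1}^{3}\}$ already surjects onto this $1$-dimensional quotient, with the divergence providing the concrete isomorphism.

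\textbf{Step 1: Moore matrices lie in the numerator.} By $K[x]$-linearity and the preceding proposition, $\tr(M_{a,x}^{\adj}M_{b,y})=\sum_{i}y_{i}\tr(M_{a,x}^{\adj}M_{b,e_{i}})$ vanishes identically, since each $\tr(M_{a,x}^{\adj}M_{b,e_{i}})\in S_{2}$ is congruent to $0$ mod $f\in S_{3}$, hence vanishes outright. So $y\mapsto[M_{b,y}]$ defines a $K$-linear map from the $9$-dimensional $S_{1}^{3}$ into $\Extgr^{1}_{R}(\mathbf{L},\mathbf{L})$.

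\textbf{Step 2: surjectivity via Serre duality.} The assignment $M_{b,e_{i}}\otimes x_{j}\mapsto[x_{j}M_{b,e_{i}}]=[M_{b,x_{j}e_{i}}]$ is the Yoneda cup product
\[
\mu:\;\Extgr^{1}_{R}(\mathbf{L},\mathbf{L}(-1))\otimes_{K}S_{1}\;\lto\;\Extgr^{1}_{R}(\mathbf{L},\mathbf{L}).
\]
Under the sheafification isomorphisms of (\ref{ext1}), valid in shifts $\geqslant -1$, this corresponds to the multiplication $H^{1}(E,\calo_{E}(-1))\otimes H^{0}(E,\calo_{E}(1))\to H^{1}(E,\calo_{E})$, which, because $\omega_{E}\cong\calo_{E}$, is the Serre-duality pairing and therefore non-degenerate, in particular surjective. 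Since the preceding proposition identifies $M_{b,e_{0}},M_{b,e_{1}},M_{b,e_{2}}$ as a basis of $\Extgr^{1}_{R}(\mathbf{L},\mathbf{L}(-1))$, the span of the images $[M_{b,x_{j}e_{i}}]$ is all of $\Extgr^{1}_{R}(\mathbf{L},\mathbf{L})$. Hence $\{M_{b,y}\}$ surjects onto the $1$-dimensional quotient, and its kernel, the intersection $\{M_{b,y}\}\cap\{UM_{a,x}-M_{a,x}V\}$, is $8$-dimensional.

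\textbf{Step 3: divergence realizes the isomorphism.} The divergence is a nonzero $K$-linear functional on $\{M_{b,y}\}$ since $\mathsf{div}(M_{b,x})=3\neq 0$ in $K$ (as $\car K\neq 3$); its kernel is therefore also $8$-dimensional. To finish it suffices to verify that this kernel agrees with the kernel from Step~2, whereupon $\mathsf{div}$ descends to a nonzero map from the $1$-dimensional quotient $\Extgr^{1}_{R}(\mathbf{L},\mathbf{L})$ to $K$, hence an isomorphism. Equivalently, on the spanning set one may compare the two functionals: $\mathsf{div}(M_{b,x_{j}e_{i}})=\delta_{ij}$ by inspection, while the Serre-duality pairing $\mu$ should give $\mu(M_{b,e_{i}}\otimes x_{j})=c\cdot\delta_{ij}$ for a common nonzero constant $c\in K$.

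\textbf{Main obstacle.} The hard part is the compatibility claim in Step~3, i.e., that under the identification $\Extgr^{1}_{R}(\mathbf{L},\mathbf{L}(-1))\cong S_{1}^{\vee}$ furnished by Serre duality, the basis $\{M_{b,e_{0}},M_{b,e_{1}},M_{b,e_{2}}\}$ is dual to $\{x_{0},x_{1},x_{2}\}$. One route is an explicit \v{C}ech (or residue) computation of $\mu(M_{b,e_{i}}\otimes x_{j})$ matched against the normalisation of the isomorphism $H^{1}(E,\calo_{E})\cong K$. A more hands-on alternative, avoiding any appeal to Serre duality, is to compute $\mathsf{div}(UM_{a,x}-M_{a,x}V)$ directly from the Moore form of $M_{a,x}$ and use the entry-by-entry matching constraints that force $UM_{a,x}-M_{a,x}V$ to have Moore shape $M_{b,y}$ to derive the vanishing of $\mathsf{div}(M_{b,y})$; the diagonal matching alone gives three expressions for the $x_{0}$-coefficient of $y_{0}$ in terms of the diagonal entries of $U$ and $V$, and combining these with the off-diagonal matchings yields the desired identity.
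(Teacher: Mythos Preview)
Your outline is correct and will work, but it takes a detour that the paper avoids. The key observation you are missing is that \emph{Step~2 is redundant once you commit to your ``hands-on alternative'' in Step~3}. Indeed, suppose you have already shown (i) every $M_{b,y}$ lies in the numerator of (\ref{ext1*}), and (ii) whenever $M_{b,y}=UM_{a,x}-M_{a,x}V$ for scalar matrices $U,V$ one has $\mathsf{div}(M_{b,y})=0$. Then $\mathsf{div}$ descends to the quotient $\{M_{b,y}\}/\bigl(\{M_{b,y}\}\cap\{UM_{a,x}-M_{a,x}V\}\bigr)$, and since $\mathsf{div}(M_{b,x})=3\neq 0$ this quotient has dimension $\geqslant 1$. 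But it injects canonically into $\Extgr^{1}_{R}(\mathbf L,\mathbf L)$, which by Lemma~\ref{dimensions} has dimension exactly $1$. So the injection is an isomorphism and $\mathsf{div}$ is an isomorphism onto $K$ --- surjectivity comes for free, with no appeal to Serre duality. This is exactly the paper's route.

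Your Serre-duality argument in Step~2 is a pleasant alternative way to see surjectivity, and it is correct (the action of $x_{j}$ on the matrix-factorization model does match cup product with $x_{j}\in H^{0}(E,\calo_{E}(1))$ under sheafification), but it buys you nothing extra: you still need the direct computation (ii), and once you have it the dimension count finishes the job. In particular your ``main obstacle'' --- identifying the basis $\{M_{b,e_{i}}\}$ with the Serre-dual basis to $\{x_{i}\}$ via a \v{C}ech or residue computation --- is a genuine obstacle only for option~(a); it disappears entirely if you go with option~(b).

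A small correction on option~(b): you will not need off-diagonal matchings. Differentiating $M_{b,y}=UM_{a,x}-M_{a,x}V$ with respect to $x_{0}$ and comparing \emph{diagonal} entries gives, after dividing through by the nonzero $a_{2i}$, three equations that sum to $0=\tr U-\tr V$, since the coefficients $(a_{2i-1}^{3}-a_{2i+1}^{3})$ on the left sum to zero. Then differentiating by all three $x_{k}$ and again comparing only diagonal entries yields nine equations; grouping the three that share a common nonzero coefficient $(a_{i}^{3}-a_{i-1}^{3})$ and summing gives $(a_{i}^{3}-a_{i-1}^{3})\,\mathsf{div}(M_{b,y})=\tr U-\tr V=0$, hence $\mathsf{div}(M_{b,y})=0$.
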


\begin{proof}
As mentioned before, with $B=M_{a,x}^{\adj}$, the function $\tr(B\cdot-):Mat_{3\times 3}(S)\to S$ is $S$--linear,
whence each matrix $M_{b,y}$ satisfies $\tr(B\cdot M_{b,y})=0$ as we know this for the matrices
$M_{0}=M_{b,(1,0,0)}, M_{1}=M_{b,(0,1,0)}, M_{2}= M_{b,(0,0,1)}$ from above. Thus, the vector space
$\left\{M_{b,y}\in \Mat_{3\times 3}(S_{1})\right\}$ is contained in the numerator of the description of
$\Extgr^{1}_{R}(\mathbf L,\mathbf L)$ in (\ref{ext1})(\ref{ext1*}). As we know from Lemma
\ref{dimensions} that this extension group is one-dimensional, and, say, $\mathsf{div} M_{b,\iota(x)}=1\in K$
as noted above, it remains only to show that the denominator in the description here lies in the kernel of the divergence.

To this end, assume $M_{b,y} = UM_{a,x}-M_{a,x}V$ for some linear forms $y_{i}$ and some 
$U,V\in\Mat_{3\times 3}(K)$. Differentiating both sides with respect to $x_{0}$ and comparing the diagonal entries
yields the system of equations
\begin{align*}
a_{2i}(a_{2i-1}^{3}-a_{2i+1}^{3})\frac{\partial y_{0}}{\partial x_{0}} &= a_{2i}(u_{ii}-v_{ii})\,,\quad
\text{for $i\in\ZZ/3\ZZ$.}
\end{align*}
Dividing by $a_{2i}$, which is not zero by assumption, and then adding up shows that necessarily
$\sum_{i\in\ZZ/3\ZZ}u_{ii} = \sum_{i\in\ZZ/3\ZZ}v_{ii}$. Differentiating as well with respect to $x_{1}, x_{2}$,
comparing entries on both sides of the matrix equation and eliminating common factors of the form $a_{i}$
leads to the system of equations 
\begin{align*}
(a_{2}^{3}-a_{1}^{3})\frac{\partial y_{0}}{\partial x_{0}} &= u_{00}-v_{00}\,,\quad 
(a_{0}^{3}-a_{2}^{3}) \frac{\partial y_{2}}{\partial x_{2}}= u_{00}-v_{11}\,,\quad 
(a_{1}^{3}-a_{0}^{3})\frac{\partial y_{1}}{\partial x_{1}} = u_{00}-v_{22}\,,\\
(a_{0}^{3}-a_{2}^{3})\frac{\partial y_{1}}{\partial x_{1}} &= u_{11}-v_{00}\,,\quad 
(a_{1}^{3}-a_{0}^{3})\frac{\partial y_{0}}{\partial x_{0}} = u_{11}-v_{11}\,,\quad
(a_{2}^{3}-a_{1}^{3}) \frac{\partial y_{2}}{\partial x_{2}} = u_{11}-v_{22}\,,\\
(a_{1}^{3}-a_{0}^{3}) \frac{\partial y_{2}}{\partial x_{2}}&= u_{22}-v_{00}\,,\quad 
(a_{2}^{3}-a_{1}^{3})\frac{\partial y_{1}}{\partial x_{1}} = u_{22}-v_{11}\,,\quad 
(a_{0}^{3}-a_{2}^{3})\frac{\partial y_{0}}{\partial x_{0}} = u_{22}-v_{22}\,.
\end{align*}
Now at least one of the terms $(a_{i}^{3}-a_{i-1}^{3}), i\in \ZZ/3\ZZ,$ that occur as coefficients on the left-hand sides is nonzero,
as not all entries of $b$ are zero. Picking one such nonzero term and using it to solve for 
$\frac{\partial y_{i}}{\partial x_{i}}, i=0,1,2$, shows immediately that 
$\mathsf{div}(M_{b,y}) =
\frac{\partial y_{0}}{\partial x_{0}} + \frac{\partial y_{1}}{\partial x_{1}} + \frac{\partial y_{0}}{\partial x_{1}} = 0$
is a necessary condition on $M_{b,y}$ to be representable as $UM_{a,x}-M_{a,x}V$. In fact, we also know
that this condition is sufficient.
\end{proof}

\subsection*{The Proof of Theorem B}\quad\\
With $F$ as in Theorem B(\ref{thmba}), Atiyah's result Theorem \ref{Atiyah} shows that $F$ can be obtained as
an extension of a line bundle $L$ by itself, with $\deg L=0$ and $H^{0}(E,L)=0$. Applying $\Gamma_{*}$ to such
extension results in an extension of $\mathbf L$ by itself and those were classified above to yield a presentation
of $F$ as claimed.

Part (\ref{thmbb}) of Theorem B follows as the cokernel $F$ of the triangular block matrix fits into a short exact
sequence $0\to L\to F\to L\to 0$ with $L=\Coker M_{a,x}$, which yields immediately that $F$ is a vectorbundle of 
rank $2$ and degree $0$ that has no nonzero sections. Moreover, $F$ is indecomposable as the extension is not 
split, due to $\mathsf{div}(M_{a,x})=3\neq 0$ in $K$.

Part (\ref{thmbc}) of Theorem B follows from Atiyah's result and from Theorem A, as the line bundle $L$
in the short sequence above is uniquely determined by $F$ up to isomorphism.
\qed

\end{document}